\date{\scriptsize   Received: , Accepted: .}
\newtheorem{theorem}{Theorem}[section]
\newtheorem{lemma}[theorem]{Lemma}
\newtheorem{proposition}[theorem]{Proposition}
\newtheorem{corollary}[theorem]{Corollary}
\theoremstyle{definition}
\newtheorem{definition}[theorem]{Definition}
\theoremstyle{remark}
\newtheorem{remark}[theorem]{Remark}
\numberwithin{equation}{section}
\begin{document}
\title[The Cozero part of the pointfree version]{The Cozero part of the pointfree version of $C_c (X)$}
\author[Estaji]{Ali Akbar Estaji$^{1*}$}
\address[Ali Akbar Estaji]{Faculty of Mathematics and Computer Sciences, Hakim Sabzevari University, Sabzevar, Iran.}
\email{aaestaji@hsu.ac.ir}

\author[Taha]{Maryam Taha}
\address[Maryam Taha]{Faculty of Mathematics and Computer Sciences, Hakim Sabzevari University, Sabzevar, Iran.}
\email{tahamaryam872@gmail.com}

\thanks{$^*$Corresponding author}

 \maketitle

\begin{abstract} 
Let $\mathcal C_{c}(L):= \{\alpha\in \mathcal{R}(L) \mid R_{\alpha} \, \text{ is a countable subset of } \,  \mathbb R \}$,  where $R_\alpha:=\{r\in\mathbb R \mid {\mathrm{coz}}(\alpha-r)\neq\top\}$ for every $\alpha\in\mathcal R  (L).$  By using idempotent elements, it is going to  prove that  ${{\mathrm{Coz}}}_c[L]:= \{{\mathrm{coz}}(\alpha)  \mid  \alpha\in\mathcal{C}_c  (L) \}$ is a  $\sigma$-frame for every completely regular frame $L,$ and from this,  we conclude  that it is regular, paracompact, perfectly normal and an Alexandroff algebra frame such that each cover of it is shrinkable. Also, we show that $L$ is a zero-dimensional frame if and only if $ L$ is a $c$-completely regular frame. \\
\textbf{Keywords:} $\sigma$-frame,  Idempotent element, Cozero element, Perfectly normal frame, Alexandroff algebra frame.\\
\textbf{MSC(2010):} Primary: 06D22; Secondary: 54C05; 54C30; 17C27.	
\end{abstract}
\section{Introduction}
Some authors, such as Ball and Hager \cite{Ball1991}, studied
the ring
$\mathcal R(L)$
of real-valued continuous functions on a
frame, namely the pointfree version of the ring
$C(X)$,
 prior to 1996.
Banaschewski \cite{Banaschewski1997-2} did
a systematic study of
the ring of real-valued continuous functions in pointfree topology 
in 1996.
 Also, to read more about 
 $\mathcal R(L)$, you can see \cite{BallWalters2002, Banaschewski(1997)1, Banaschewski(1996),  Stone1982, PicadoPultr}.

The largest subring of $C(X)$,
 whose elements are countable images, is $C_c(X)$,
which leads to research on the pointfree version of its, that is,
 $\mathcal{R}_c(L)$, by Estaji
et al.
\cite{Sarpoushi-1, Sarpoushi-2,  Sarpoushi-thesis, Elyasi}. 
Estaji at al. \cite{estaj2}
defined
$R_\alpha:=\{r\in\mathbb{R}\mid~\mathrm{coz}(\alpha-r)\neq\top\}$
for every
$\alpha\in \mathcal R(L)$.
The frame of open sets of a space $X$ is called $\mathcal{O}(\mathbb{X})$.
 Aliabadi and Mahmodi \cite{Aliabad} studied $\mathrm{pim}(\alpha):=\bigcap\{\omega\in\mathcal{O}(\mathbb{R})\mid~\alpha(\omega)=\top\}$ for every
$\alpha\in \mathcal R(L)$, and proved that $\mathrm{pim}(\alpha)=R_\alpha$.

 By using the definition of
$R_\alpha$,  
authors of \cite{EstajiTaha0} introduced  another  pointfree version of
$C_c(X)$ via the range of  functions  
and denoted it by
$\mathcal C_{c}(L)$,
which is more compatible with the functions  of countable range. 
Also, in  \cite{EstajiTaha1}, it was proved that  every radical ideal in it is an absolutely
convex ideal.
Characterization of the clean elements of
$\mathcal R(L)$ has been done
and 
 proved when
$\mathcal C_c(L)$
is clean (see \cite{EstajiTaha2}).

The cozero map plays a key
role in a way not usually encountered in ordinary topology; in fact, its
main essence is that it provides a pointfree way of dealing with important
point-dependent classical arguments.
The collection of all cozero elements of $L$ is called the \textit{ cozero part} and  is usually denoted by ${\mathrm{Coz}}[L]$.
General properties of cozero elements  and ${\mathrm{Coz}}[L]$ can be found in \cite{Banaschewski2009,Banaschewski(1996)}.
Cozero elements play a particular role in $\mathcal R(L)$, $\mathcal{R}_c(L)$, and $\mathcal C_{c}(L)$.
The cozero part of a frame $L$ is seen to be a  sublattice of $L$, and also, ${\mathrm{Coz}}[L]$ is a $\sigma$-frame.

The main purpose of this article is to show that
${{\mathrm{Coz}}}_c[L]$ is a $\sigma$-frame for every completely regular frame $L$ and that
$L$ is a zero-dimensional frame if and only if $ L$
is a $c$-completely regular frame.

\section{Preliminaries}
A complete lattice $L$ 
is said to be a \textit{frame} if 
$a\wedge\,\bigvee B=\bigvee_{b\in B}(a\wedge b),$
for any 
$a\in L$ and $B\subseteq L$. 
We denote the \textit{top} element and the \textit{bottom} element of a frame 
$L$ by $\top$ and 
$\bot$, respectively. For every element
$a$
of a frame
$L$, set
 $\uparrow  \hspace{-1mm}a := \{ b \in L \mid a \leq b \}$ and $ a \to b :=\{ x \in L \mid a \wedge x \leq b\}$ for every $a, b\in L.$
The \textit{pseudocomplement} of $a \in L$ is the element $a^* := a \to \bot =\bigvee\{x \in L \mid x\wedge a = \bot\}.$
If $a \vee a^* =\top$, then $a$ is said to be \textit{complemented} and the complement of $a$ denoted by $a'$.

 
Recall  from \cite{Banaschewski1997-2}  that
 the \textit{frame of  reals}
 is the frame
$\mathcal{L}(\mathbb{R})$
generated by
all ordered pairs
$(p,q)$
with $p, q \in \mathbb{Q}$,
subject to the following relations:
\begin{enumerate} 
\item[{\rm (R1)}]  
$(p,q)\wedge (r,s)=(p\vee r,q\wedge s)$, 
\item[{\rm (R2)}]  
$(p,q)\vee (r,s)=(p,s)$, whenever $p\leq r<q\leq s,$ 
\item[{\rm (R3)}]  
$(p,q)=\bigvee \{ (r,s) \mid  p<r<s<q\},$ 
\item[{\rm (R4)}] 
 $\top=\bigvee \{ (p,q) \mid  p,q\in{\mathbb Q}\}$.
\end{enumerate}

A \textit{ continuous real function}\index{continuous real function on a frame} on a frame is a frame homomorphism $\mathcal{L}(\mathbb{R}) \rightarrow L$.
The set of all continuous real functions on a frame $L$ 
 is denoted by $\mathcal{R} (L)$.  
We recall from \cite{Banaschewski1997-2}  that
the set $\mathcal{R}(L)$ with the operator $\diamond\in \{+,\cdot, \wedge, \vee\}$ defined by
\[ \begin{aligned}
(\alpha \diamond \beta)(p,q)=\bigvee\big\{\alpha(r,s)\wedge \beta(u,v)\mid   <r,s>\diamond <u,v>\subseteq <p,q>\big\}  
\,  \text{(}\forall\alpha, \beta \in \mathcal{R}(L)  \text{)}
\end{aligned} \]
is an $f$-ring. Here, $<p,q>:=\{r\in \mathbb{R}\mid  p<r<q\}$ for every $p,q\in \mathbb{Q}$, and 
\[ \begin{aligned}
<r,s>\diamond <u,v>=\big\{y\diamond z\mid  y\in <r,s>, \, z\in <u,v>\big\}.
\end{aligned} \]  

To facilitate the work, for every $p,q\in \mathbb{Q},$ we put 
 \begin{center}
$( p , -):= \bigvee \limits_{\substack{  r,s>p,\\ r,s\in\mathbb{Q}}} (r,s) 
$
and
$ ( - , q):=\bigvee \limits_{\substack{  r,s<q,\\ r,s\in\mathbb{Q}}} (r,s)
.$
 \end{center}  
For  $\alpha \in \mathcal{R} (L),$
$ \alpha( - , 0) \vee\alpha (0 ,-)$
is called \textit{ cozero element} of $L$
and denoted by ${\mathrm{coz}}(\alpha).$ 
In {\em\cite{{Banaschewski1997-2},{PicadoPultr}}},
a frame is completely regular if and only if it is generated by its
cozero elements, that is, for every $x\in L $,  
there exists $ \{\alpha_i\}_{i\in I}\subseteq   \mathcal{R} (L)$ such that 
$$x=\bigvee_{{\mathrm{coz}}(\alpha_i)\leq x} {\mathrm{coz}}(\alpha_i)\,.$$

Apart from frames, we shall also consider  $\sigma$-frames, defined as below.
A \linebreak \textit{ $\sigma$-frame}  is a  distributive lattice $L$ in which any countable subset has a join such that
\[
 a\wedge \bigvee S=\bigvee_{s\in S}(a\wedge s),
 \]
for all $a \in L$ and any countable subset  $S\subseteq L$. 
For example, 
$$
{\mathrm{Coz}}[L]:= \{{\mathrm{coz}}(\alpha)  \mid \alpha\in \mathcal{R}(L) \}
$$
 is a sub-$\sigma$-frame of $L$ (see 
 \cite[Corollary 2]{Banaschewski(1996)}). 

We recall from \cite{estaj2} that for each $\alpha\in\mathcal{R}(L)$, 
 $R_\alpha:=\{r\in\mathbb{R}\mid~\mathrm{coz}(\alpha-r)\neq\top\}$ 
and also, 
$$
\mathcal C_{c}(L):= \{\alpha\in \mathcal{R}(L) \mid
R_{\alpha} \, \text{ is a countable subset of } \,  \mathbb R  \}
$$
  as the pointfree topology version of the ring
$C_c(X)$ (for more details see \cite{EstajiTaha0, EstajiTaha1, EstajiTaha2})
and we set 
$$
\mathrm{Coz}_c[L] := \{{\mathrm{coz}}(\alpha)  \mid  \alpha\in\mathcal{C}_c(L) \}.$$ 
 
\section{Algebraic operations of  the idempotent elements of  $ \mathcal{R}(L)$} 
It is written 
$B(A)$
for the set of all the idempotent elements of a given ring  $A$. If $e\in B(\mathcal{R}(L))$, then ${\mathrm{coz}}(e)$ is complemented.
For  $e_1, e_2\in B(\mathcal{R}(L))$ in this section,
 we check  
$(e_1\diamond e_2)(-,x)$
and
$(e_1\diamond e_2)(x,-)$
for every $x\in \mathbb{Q}$,
where
$\diamond\in\{+, -, \cdot, \vee, \wedge\}.$
\begin{remark} \label{remark3}
Let $e $ be an  idempotent element  of  $ \mathcal{R}(L)$. 
Then the following statements are  true: 
\begin{enumerate} 
\item[{\rm (1)}]  
  For every $x\in \mathbb{Q}$, 
\[ \begin{aligned}
  e(x,-)=  
  \begin{cases}
\top, & x<0, \\
 {{\mathrm{coz}}}(e),& 0\leq x<1,\\
\bot,   & x \geq 1,
\end{cases} \,\,\, \text{and} \,\,\, \,\,\,
  e(-,x)=  
  \begin{cases}
\bot ,& x\leq 0 ,\\
 \left({{\mathrm{coz}}}(e)\right)' ,& 0< x\leq1,\\
\top,   & x > 1.
\end{cases}
 \end{aligned} \]
 \item[{\rm (2)}]  
If $x,k\in \mathbb{Q}$ with $k>0$, then   

\[ \begin{aligned}
  ke(x,-)=  
  \begin{cases}
\top, & x<0, \\
 {{\mathrm{coz}}}(e), & 0\leq x<k,\\
\bot ,  & x \geq k,
\end{cases} \,\,\, \text{and} \,\,\, \,\,\,
  ke(-,x)=  
  \begin{cases}
\bot, & x\leq 0 ,\\
 \left({{\mathrm{coz}}}(e)\right)', & 0< x\leq k,\\
\top ,  & x > k.
\end{cases}
 \end{aligned} \]
 \item[{\rm (3)}]  
 If $x,k\in \mathbb{Q}$ with $k<0$, then   

\[ \begin{aligned}
  ke(x,-)=  
  \begin{cases}
\bot ,& x\geq 0, \\
 \left({{\mathrm{coz}}}(e)\right)' ,& k\leq x<0,\\
\top,  & x< k,
\end{cases} \,\,\, \text{and} \,\,\, \,\,\,
  ke(-,x)=  
  \begin{cases}
\top, & x> 0 ,\\
 {{\mathrm{coz}}}(e) , & k< x\leq 0,\\
\bot ,  & x \leq k.
\end{cases}
 \end{aligned} \]
\end{enumerate} 
\end{remark}
\begin{lemma}    \label{prime5}
If $e_1, e_2\in B(\mathcal{R}(L))$, then  for every $x\in \mathbb{Q}$, we have
\[ \begin{aligned} 
(e_1+e_2)(x,-)=   
\begin{cases}
\top & x<0, \\
 {{\mathrm{coz}}}(e_1+e_2), & 0\leq x<1,\\
  {{\mathrm{coz}}}(e_1) \wedge {{\mathrm{coz}}}(e_2), & 1\leq x < 2,\\
\bot,   & x \geq 2,
\end{cases}
\end{aligned} \]
and 
\[ \begin{aligned} 
(e_1+e_2)(-,x)=   
\begin{cases}
\bot, & x\leq 0, \\
 \big({{\mathrm{coz}}}(e_1)\big)' \wedge \big({{\mathrm{coz}}}(e_2) \big)',
 & 0< x\leq1,\\
  \big({{\mathrm{coz}}}(e_1)\big)' \vee \big({{\mathrm{coz}}}(e_2) \big)' ,& 1< x \leq 2,\\
\bot ,  & x > 2.
\end{cases}
\end{aligned} \]
\end{lemma}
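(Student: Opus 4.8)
The plan is to pass from the $f$-ring addition, which is defined on the bounded generators $(p,q)$, to the two half-rays $(x,-)$ and $(-,x)$, and then simply read off the values of $e_1$ and $e_2$ on half-rays from Remark \ref{remark3}. Since $(x,-)=\bigvee_{r,s>x}(r,s)$ and $(-,x)=\bigvee_{r,s<x}(r,s)$, and since $e_1+e_2\colon\mathcal{L}(\mathbb{R})\to L$ is a frame homomorphism and hence preserves these joins, the defining formula for $\diamond=+$ together with the relations (R1)--(R4) yields the additivity identities
\begin{equation*}
(e_1+e_2)(x,-)=\bigvee_{\substack{r,s\in\mathbb{Q}\\ r+s=x}} e_1(r,-)\wedge e_2(s,-)
\qquad\text{and}\qquad
(e_1+e_2)(-,x)=\bigvee_{\substack{r,s\in\mathbb{Q}\\ r+s=x}} e_1(-,r)\wedge e_2(-,s).
\end{equation*}
The inclusion ``$\geq$'' in each identity is immediate, because $e_1(r,-)\wedge e_2(s,-)\leq(e_1+e_2)(r+s,-)$ whenever $r+s=x$; the reverse inclusion is where one approximates $e_1$ and $e_2$ from below by rationals and appeals to the interpolation relation (R3).

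With these identities in hand, I would substitute the three-line descriptions of $e_i(r,-)$ and $e_i(-,r)$ from Remark \ref{remark3} (whose values lie in $\{\top,\mathrm{coz}(e_i),\bot\}$, respectively in $\{\bot,(\mathrm{coz}(e_i))',\top\}$, with jumps only at $0$ and $1$) and evaluate the resulting join separately on each of the four ranges of $x$. For $(e_1+e_2)(x,-)$: if $x<0$ one may take $r,s<0$, so the join contains $\top\wedge\top=\top$; if $0\leq x<1$, the non-$\bot$ summands are $\top\wedge\mathrm{coz}(e_2)$, $\mathrm{coz}(e_1)\wedge\top$ and $\mathrm{coz}(e_1)\wedge\mathrm{coz}(e_2)$ (a decomposition with $r<0$ forces $0\leq s<1$, which is attainable precisely because $x<1$), whose join is $\mathrm{coz}(e_1)\vee\mathrm{coz}(e_2)=\mathrm{coz}(e_1+e_2)$, the last equality holding because an idempotent satisfies $e_i=e_i^2\geq\mathbf 0$, so $e_1,e_2$ are nonnegative; if $1\leq x<2$, any decomposition with $r<0$ forces $s=x-r>1$, killing the $e_2$-factor, so only $0\leq r,s<1$ contribute and the join collapses to $\mathrm{coz}(e_1)\wedge\mathrm{coz}(e_2)$; and if $x\geq2$, every decomposition has $r\geq1$ or $s\geq1$, so every summand is $\bot$. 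The computation of $(e_1+e_2)(-,x)$ proceeds by the same template, now determining in each range which of the lower summands $e_1(-,r)\wedge e_2(-,s)$ survive, with the complements $(\mathrm{coz}(e_1))'$ and $(\mathrm{coz}(e_2))'$ playing the role of $\mathrm{coz}(e_1)$ and $\mathrm{coz}(e_2)$.

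The only genuinely delicate point is the justification of the two additivity identities, specifically their reverse inclusions, which is exactly where relation (R3) enters; everything afterwards is a finite case analysis. Within that analysis the central ranges $1\leq x<2$ (for the upper half-ray) and $1<x\leq2$ (for the lower one) deserve the most care, since there one must verify that the ``mixed'' summands collapse exactly to $\mathrm{coz}(e_1)\wedge\mathrm{coz}(e_2)$, respectively to $(\mathrm{coz}(e_1))'\vee(\mathrm{coz}(e_2))'$. This step relies on the fact, recorded at the start of this section, that each $\mathrm{coz}(e_i)$ is complemented, so that the meets and joins of the $\mathrm{coz}(e_i)$ with their complements obey the Boolean rules one expects. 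By contrast, the extreme ranges should cause no trouble, as in each of them a single well-chosen decomposition $r+s=x$ already realizes the asserted value.
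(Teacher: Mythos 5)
Your argument is essentially the paper's own proof: the paper likewise expresses $(e_1+e_2)(x,-)$ as the join of $H_{(x,r)}=e_1(r,-)\wedge e_2(x-r,-)$ over $r\in\mathbb{Q}$ (taking the convolution identity for granted, as you may as well, since it is standard and used repeatedly later in the paper) and then runs the same four-range case analysis by substituting the values from Remark \ref{remark3}; in fact the paper only carries this out for the upper half-ray and silently omits the $(e_1+e_2)(-,x)$ computation that you sketch. One caveat about that omitted half: if you actually run your template at $x>2$, you may choose rationals $r,s>1$ with $r+s=x$, so the join contains $e_1(-,r)\wedge e_2(-,s)=\top\wedge\top=\top$; the correct value there is therefore $\top$ (consistent with $e_1+e_2\le\mathbf{2}$ and with $(e_1+e_2)(x,-)\vee(e_1+e_2)(-,y)=\top$ for $x<y$), not the $\bot$ printed in the last line of the statement. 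That line is a misprint in the lemma rather than a defect of your method, but your write-up should derive $\top$ there rather than claim to reproduce the printed value.
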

\begin{proof} 
For every $x\in \mathbb{Q}$, if 
$x<0$, then $(e_1+e_2)(x,-)= \top$, 
since $ e_1+e_2\geq \mathbf{0}$. 
It is evident that $ {{\mathrm{coz}}}(e_1+e_2)=(e_1+e_2)(0,-)$. 
For every $x,r\in \mathbb{Q}$, we define 
 $H_{(x,r)}=e_1(r,-)\wedge e_2(x-r,-)$. 
  Let $x\in \mathbb{Q}$ with $0<x<1$ be given. 
  Then there exist $r, s\in \mathbb{Q}$ such that 
  $0<r<x<s<1$, which implies that 
   $H_{(x,x-s)}= {{\mathrm{coz}}}(e_2)$,  
  $H_{(x,s)}= {{\mathrm{coz}}}(e_1)$, and 
  $H_{(x,r)}={{\mathrm{coz}}}(e_1)\wedge  {{\mathrm{coz}}}(e_2)$.
  Therefore, 
\[ \begin{aligned}
  (e_1+e_2)(x,-)&=   {{\mathrm{coz}}}(e_2)\vee  {{\mathrm{coz}}}(e_1)\vee \big( {{\mathrm{coz}}}(e_1)\wedge  {{\mathrm{coz}}}(e_2)\big)\\
  &=  {{\mathrm{coz}}}(e_1)\vee  {{\mathrm{coz}}}(e_2)\\
  &= {{\mathrm{coz}}}(e_1+e_2) 
 \end{aligned} \]
 for every $x\in \mathbb{Q}$ with $0\leq x<1$. 
 Let $x\in \mathbb{Q}$ with $1\leq x<2$ be given.   
 Then for every $r\in \mathbb{Q}$, 
 \[ \begin{aligned}    
 & x-r<0 \Rightarrow 1\leq x<r
  \Rightarrow H_{(x,r)}=\bot, \\
 & 0\leq x-r <1 
   \Rightarrow \left(0\leq r <1 \text{ and } 0\leq x-r <1 \right)
    \Rightarrow  H_{(x,r)}= {{\mathrm{coz}}}(e_1)\wedge  {{\mathrm{coz}}}(e_2),  \text{ and } \\
&   x-r\geq 1  \Rightarrow  H_{(x,r)}=\bot.
 \end{aligned} \]
 Hence, 
 \[ \begin{aligned}
  (e_1+e_2)(x,-)=   {{\mathrm{coz}}}(e_1)\wedge  {{\mathrm{coz}}}(e_2) 
 \end{aligned} \]
  for every $x\in \mathbb{Q}$ with $1\leq x<2$. Let $x\in \mathbb{Q}$ with $ x\geq 2$ be given.   
 Then for every $r\in \mathbb{Q}$,  if  
$    0\leq x-r <1$ or $x-r<0$, then $r>1$ or $2\leq x<r$, which implies that 
$ H_{(x,r)}=\bot$. 
Therefore,  for every $x\in \mathbb{Q}$ with $2\leq x $, we have  
$
  (e_1+e_2)(x,-)=    \bot. 
$ 
\end{proof}
\begin{lemma}    \label{prime80}
If $e_1, e_2\in B(\mathcal{R}(L))$, then  for every $x\in \mathbb{Q}$, it holds that
\[ \begin{aligned} 
(e_1e_2)(x,-)=   
\begin{cases}
\top, & x<0, \\
{{\mathrm{coz}}}(e_1) \wedge {{\mathrm{coz}}}(e_2), &  0\leq x<1,\\
\bot, & x\geq 1,
\end{cases}
\end{aligned} \]
and 
\[ \begin{aligned} 
(e_1e_2)(-,x)=   
\begin{cases}
\bot, & x\leq 0, \\
\big({{\mathrm{coz}}}(e_1) \big)'\vee \big({{\mathrm{coz}}}(e_2) \big)', & 0<x\leq 1,\\
\top, &  x>1.
\end{cases}
\end{aligned} \]
\end{lemma}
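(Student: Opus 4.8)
The plan is to reduce the entire statement to a single application of Remark~\ref{remark3} by observing that the product $e_1 e_2$ is again idempotent. Since $\mathcal{R}(L)$ is a (commutative) $f$-ring, from $e_1^2=e_1$ and $e_2^2=e_2$ we obtain $(e_1 e_2)^2=e_1^2 e_2^2=e_1 e_2$, so $e:=e_1 e_2\in B(\mathcal{R}(L))$. Remark~\ref{remark3}(1) applied to $e$ then gives, for every $x\in\mathbb{Q}$,
\[
e(x,-)=
\begin{cases}
\top, & x<0,\\
{\mathrm{coz}}(e), & 0\leq x<1,\\
\bot, & x\geq 1,
\end{cases}
\qquad
e(-,x)=
\begin{cases}
\bot, & x\leq 0,\\
\big({\mathrm{coz}}(e)\big)', & 0<x\leq 1,\\
\top, & x>1.
\end{cases}
\]
Thus the outer cases $x<0$, $x\geq 1$ (resp.\ $x\leq 0$, $x>1$) already match the claim, and it only remains to express the middle entry in terms of ${\mathrm{coz}}(e_1)$ and ${\mathrm{coz}}(e_2)$.

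For the first formula I would use the standard cozero-product identity ${\mathrm{coz}}(\alpha\beta)={\mathrm{coz}}(\alpha)\wedge{\mathrm{coz}}(\beta)$, which holds for all $\alpha,\beta\in\mathcal{R}(L)$ and is among the general properties of cozero elements recalled from \cite{Banaschewski(1996)}. Specializing to $\alpha=e_1$, $\beta=e_2$ yields ${\mathrm{coz}}(e)={\mathrm{coz}}(e_1)\wedge{\mathrm{coz}}(e_2)$, exactly the entry needed for $0\leq x<1$.

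For the second formula I would then pass to complements. As $e_1,e_2\in B(\mathcal{R}(L))$, both ${\mathrm{coz}}(e_1)$ and ${\mathrm{coz}}(e_2)$ are complemented, and in any frame the meet of two complemented elements is complemented, with $\big({\mathrm{coz}}(e_1)\wedge{\mathrm{coz}}(e_2)\big)'=\big({\mathrm{coz}}(e_1)\big)'\vee\big({\mathrm{coz}}(e_2)\big)'$; this is the routine De~Morgan check that $\big({\mathrm{coz}}(e_1)\big)'\vee\big({\mathrm{coz}}(e_2)\big)'$ meets ${\mathrm{coz}}(e_1)\wedge{\mathrm{coz}}(e_2)$ in $\bot$ and joins with it to $\top$. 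Substituting into $e(-,x)=\big({\mathrm{coz}}(e)\big)'$ for $0<x\leq 1$ gives the stated value.

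The one step that genuinely carries content — and which I expect to be the main obstacle — is the identity ${\mathrm{coz}}(e_1 e_2)={\mathrm{coz}}(e_1)\wedge{\mathrm{coz}}(e_2)$. If a self-contained derivation in the computational spirit of Lemma~\ref{prime5} is preferred to citing it, one would expand $(e_1 e_2)(0,-)$ via the product formula $(e_1 e_2)(p,q)=\bigvee\{e_1(r,s)\wedge e_2(u,v)\mid \langle r,s\rangle\cdot\langle u,v\rangle\subseteq\langle p,q\rangle\}$ and evaluate each factor through Remark~\ref{remark3}. Tracking which product-intervals $\langle r,s\rangle\cdot\langle u,v\rangle$ fall inside $\langle 0,-\rangle$ is the laborious part, but the surviving joins collapse precisely to ${\mathrm{coz}}(e_1)\wedge{\mathrm{coz}}(e_2)$, recovering the same conclusion.
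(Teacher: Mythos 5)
Your argument is correct, and it takes a genuinely different route from the paper. The paper proves the lemma by brute force: it expands $(e_1e_2)(x,-)$ and $(e_1e_2)(-,x)$ via the defining join for the product in $\mathcal{R}(L)$, introduces the auxiliary elements $H_{(x,s)}=e_1(s,-)\wedge e_2(\frac{x}{s},-)$, and runs a case analysis on $x$ and $s$ exactly as you describe in your final paragraph as the ``computational fallback.'' You instead observe that $e_1e_2$ is itself idempotent (legitimate, since $\mathcal{R}(L)$ is commutative), so Remark~\ref{remark3}(1) already gives all six cases with $\mathrm{coz}(e_1e_2)$ and $\big(\mathrm{coz}(e_1e_2)\big)'$ in the middle rows; the standard identity $\mathrm{coz}(\alpha\beta)=\mathrm{coz}(\alpha)\wedge\mathrm{coz}(\beta)$ and De~Morgan for complemented elements finish the job. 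Your reduction is shorter, less error-prone, and makes the structural reason for the formula transparent; what it costs is an appeal to the cozero product identity, which the paper's self-contained computation avoids (the paper only cites general properties of cozero elements in the preliminaries rather than invoking them here). Note also that the same idempotency trick does not dispose of Lemma~\ref{prime5}, since $e_1+e_2$ is not idempotent, which may be why the authors chose to handle all the operations uniformly by direct computation.
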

\begin{proof} 
For every $x\in \mathbb{Q}$, if 
$x<0$, then $(e_1e_2)(x,-)= \top$, 
since $ e_1e_2\geq \mathbf{0}$. 
For every 
$x,s\in \mathbb{Q}$, 
we define 
 $H_{(x,s)}=e_1(s,-)\wedge e_2(\frac{x}{s},-)$.
Let
$0<s\in \mathbb{Q}$.
Then for
$x=0$,
we have
 \[ \begin{aligned}    
 & 0<s<1  \Rightarrow  H_{(x,s)}={{\mathrm{coz}}}(e_1)\wedge  {{\mathrm{coz}}}(e_2), \\
 & s\geq 1    \Rightarrow 
    H_{(x,s)}= \bot.
 \end{aligned} \] 
Therefore, 
$(e_1e_2)(0,-)=\big({{\mathrm{coz}}}(e_1)\wedge  {{\mathrm{coz}}}(e_2)\big)\vee \bot= {{\mathrm{coz}}}(e_1)\wedge  {{\mathrm{coz}}}(e_2)$. 
 
Let 
$0<x<1$.
Then 
 \[ \begin{aligned}    
 &x<s<1  \Rightarrow  H_{(x,s)}={{\mathrm{coz}}}(e_1)\wedge  {{\mathrm{coz}}}(e_2), \\
 & s<x  \text{ and } s>1 \Rightarrow 
    H_{(x,s)}= \bot.
 \end{aligned} \] 
Hence, 
$(e_1e_2)(x,-)=\big({{\mathrm{coz}}}(e_1)\wedge  {{\mathrm{coz}}}(e_2)\big)\vee \bot= {{\mathrm{coz}}}(e_1)\wedge  {{\mathrm{coz}}}(e_2)$. 

Let 
$x\geq1$. Then
\[ \begin{aligned}    
 &x\geq s  \Rightarrow  H_{(x,s)}=\bot, \\
 & s> x  \Rightarrow 
    H_{(x,s)}= \bot.
 \end{aligned} \] 
 Thus,
$(e_1e_2)(x,-)= \bot.$

 For every $x\in \mathbb{Q}$, if 
$x<0$, then $(e_1e_2)(x,-)= \bot$.
For every 
$x,s\in \mathbb{Q}$, 
we define 
 $H_{(s,x)}=e_1(-,s)\wedge e_2(-,\frac{x}{s})$. 
Let
$0<s\in \mathbb{Q}$.
For
$x=0$,
we have
 \[ \begin{aligned}    
 & 0<s\leq1 \Rightarrow  H_{(x,s)}=\big({{\mathrm{coz}}}(e_1) \big)'\wedge  \bot=\bot, \\
 & s> 1    \Rightarrow 
    H_{(x,s)}= \top\wedge\bot=\bot.
 \end{aligned} \] 
Therefore, 
$(e_1e_2)(-,x)= \bot$.

 Let 
$0<x\leq 1$.
Then 
 \[ \begin{aligned}    
 & 0<s\leq x  \Rightarrow  H_{(s,x)}=\big({{\mathrm{coz}}}(e_1) \big)'\wedge\top=\big({{\mathrm{coz}}}(e_1) \big)' , \\
  & x<s\leq1  \Rightarrow  H_{(s,x)}=\big({{\mathrm{coz}}}(e_1) \big)'\wedge   \big({{\mathrm{coz}}}(e_2) \big)', \\
 & s> 1    \Rightarrow 
    H_{(s,x)}= \top\wedge\big({{\mathrm{coz}}}(e_2) \big)'=\big({{\mathrm{coz}}}(e_2) \big)'.
  \end{aligned} \] 

 Thus,
 $$
\begin{array}{lll}
(e_1e_2)(-,x)  &= \big({{\mathrm{coz}}}(e_1) \big)'\vee\big(\big({{\mathrm{coz}}}(e_1) \big)'\wedge\big({{\mathrm{coz}}}(e_2) \big)'\big)\vee \big({{\mathrm{coz}}}(e_2) \big)' \\[2mm]
  &= \big({{\mathrm{coz}}}(e_1) \big)'\vee \big({{\mathrm{coz}}}(e_2) \big)'.\\
\end{array}
$$
 Let 
$x> 1$. 
Then
\[ \begin{aligned}    
 & s\leq 1  \Rightarrow  H_{(s,x)}=\big({{\mathrm{coz}}}(e_1) \big)'\wedge\top=\big({{\mathrm{coz}}}(e_1) \big)' , \\
  &x<s  \Rightarrow  H_{(s,x)}=\top\wedge   \big({{\mathrm{coz}}}(e_2) \big)'=\big({{\mathrm{coz}}}(e_2) \big)', \\
 &1<s<x    \Rightarrow 
    H_{(s,x)}= \top.
  \end{aligned} \] 
  Therefore,
$(e_1e_2)(-,x)=\big({{\mathrm{coz}}}(e_1) \big)'\vee\big({{\mathrm{coz}}}(e_2) \big)'\vee\top=\top.$
\end{proof}
Similar to the process of proving the above lemma, the next lemma was proved.
\begin{lemma}    \label{prime60}
If 
$e_1, e_2\in B(\mathcal{R}(L))$, 
then  for every $x\in \mathbb{Q}$, we have
\[ \begin{aligned} 
(e_1\vee e_2)(x,-)=   
\begin{cases}
\top, & x<0, \\
{{\mathrm{coz}}}(e_1+e_2), & 0\leq x<1,\\
\bot,   & x \geq 1,
\end{cases}
\end{aligned} \]

\[ \begin{aligned} 
(e_1\vee e_2)(-,x)=   
\begin{cases}
\bot ,& x\leq 0, \\
 \big({{\mathrm{coz}}}(e_1)\big)' \wedge \big({{\mathrm{coz}}}(e_2) \big)',
 & 0< x\leq1,\\
\top,   & x > 1,
\end{cases}
\end{aligned} \]
\[ \begin{aligned} 
(e_1\wedge e_2)(x,-)=   
\begin{cases}
\top, & x<0, \\
 {{\mathrm{coz}}}(e_1\wedge e_2), & 0\leq x<1,\\
\bot,   & x \geq 1,
\end{cases}
\end{aligned} \]

and 

\[ \begin{aligned} 
(e_1\wedge e_2)(-,x)=   
\begin{cases}
\bot, & x\leq 0, \\
   \big({{\mathrm{coz}}}(e_1)\big)' \vee \big({{\mathrm{coz}}}(e_2) \big)',  & 0< x \leq 1,\\
\top,   & x >1.
\end{cases}
\end{aligned} \]
\end{lemma}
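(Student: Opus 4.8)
The plan is to follow exactly the template of Lemma \ref{prime80}: first reduce each of the four expressions, via the generator description of the operations, to a De~Morgan-type combination of the half-line values $e_i(x,-)$ and $e_i(-,x)$; then substitute the three-line data of Remark \ref{remark3}(1); and finally rewrite the two nontrivial middle values in the form demanded by the statement. The starting point is the interval arithmetic: since $\vee$ and $\wedge$ act as $\max$ and $\min$ on $\mathbb{R}$, one checks that $\langle r,s\rangle\vee\langle u,v\rangle=\langle r\vee u,\,s\vee v\rangle$ and $\langle r,s\rangle\wedge\langle u,v\rangle=\langle r\wedge u,\,s\wedge v\rangle$.

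Feeding these into the defining join collapses it directly, because $\max(y,z)>x$ holds iff $y>x$ \emph{or} $z>x$ (a join), $\max(y,z)<x$ holds iff $y<x$ \emph{and} $z<x$ (a meet), and dually for $\min$. Writing $(x,-)=\bigvee_{x<a<b}(a,b)$ and $(-,x)=\bigvee_{a<b<x}(a,b)$, absorbing the free endpoint into the supremum, and using $e_i(r,s)\le e_i(x,-)$ whenever $r>x$ (together with $e_j(u,v)\le\top$) for one inequality, and $\top=\bigvee e_j(u,v)$ for the reverse, one obtains the four identities
\[
(e_1\vee e_2)(x,-)=e_1(x,-)\vee e_2(x,-),\qquad (e_1\wedge e_2)(x,-)=e_1(x,-)\wedge e_2(x,-),
\]
\[
(e_1\vee e_2)(-,x)=e_1(-,x)\wedge e_2(-,x),\qquad (e_1\wedge e_2)(-,x)=e_1(-,x)\vee e_2(-,x).
\]
Substituting Remark \ref{remark3}(1) now makes the cases $x<0$ and $x\ge 1$ (resp. $x\le 0$ and $x>1$) immediate, since both factors are simultaneously $\top$ or $\bot$; the only content lies in the middle range, where one reads off $\mathrm{coz}(e_1)\vee\mathrm{coz}(e_2)$ and $(\mathrm{coz}(e_1))'\wedge(\mathrm{coz}(e_2))'$ for $\vee$, and $\mathrm{coz}(e_1)\wedge\mathrm{coz}(e_2)$ and $(\mathrm{coz}(e_1))'\vee(\mathrm{coz}(e_2))'$ for $\wedge$.

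The last step is to recast the two middle-range joins into the stated shape. For the $\vee$-formula I would invoke Lemma \ref{prime5}, which already records $\mathrm{coz}(e_1+e_2)=(e_1+e_2)(0,-)=\mathrm{coz}(e_1)\vee\mathrm{coz}(e_2)$, so the value $\mathrm{coz}(e_1)\vee\mathrm{coz}(e_2)$ is precisely $\mathrm{coz}(e_1+e_2)$. For the $\wedge$-formula, since $e_1\wedge e_2\ge\mathbf 0$ we have $(e_1\wedge e_2)(-,0)=\bot$, whence $\mathrm{coz}(e_1\wedge e_2)=(e_1\wedge e_2)(0,-)=\mathrm{coz}(e_1)\wedge\mathrm{coz}(e_2)$, identifying the middle value as $\mathrm{coz}(e_1\wedge e_2)$.

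The main obstacle is the reduction in the second paragraph: justifying rigorously that the supremum over generators telescopes to the displayed meet/join of half-line values, i.e. that the containment $\langle r,s\rangle\diamond\langle u,v\rangle\subseteq\langle p,q\rangle$ translates exactly into the De~Morgan combination and that no spurious term survives. This is the only genuinely interval-theoretic point; once it is secured, the remaining work is the substitution from Remark \ref{remark3}(1) and the two coz-identities, mirroring the closing computations of Lemmas \ref{prime5} and \ref{prime80}.
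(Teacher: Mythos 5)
Your proposal is correct and follows essentially the route the paper intends: the paper gives no separate proof of this lemma, stating only that it is proved like Lemma \ref{prime80}, i.e., by unwinding the defining join over generators into the half-line values of $e_1$ and $e_2$ and then substituting Remark \ref{remark3}(1). Your intermediate identities $(e_1\vee e_2)(x,-)=e_1(x,-)\vee e_2(x,-)$, $(e_1\vee e_2)(-,x)=e_1(-,x)\wedge e_2(-,x)$ and their duals are exactly the clean form of that computation, and your identifications of the middle values with $\mathrm{coz}(e_1+e_2)$ and $\mathrm{coz}(e_1\wedge e_2)$ are justified as you indicate.
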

\section{Dedekind cuts for $\mathbb{R}$ by  idempotent  elements and prime ideals} 
In article \cite{Aliabad}, 
$ A_{p}(\alpha)$ and $B_{p}(\alpha)$
were introduced. 
We found that it is better to use $P_{u}(\alpha)$ and $P_{l}(\alpha)$ symbols instead of them, which would  be easier for the reader to understand the concepts.
They are given below.
For every 
$\alpha\in \mathcal{R}(L)$ and every prime ideal 
$P $ in $L$, let 
\begin{center}  
  $ P_{u}(\alpha):=\{ x\in \mathbb{Q}\mid \alpha(x,-) \in P\}$ \,\,\,\,and\,\,\,\, $ P_{l} (\alpha):=\{x\in \mathbb{Q}\mid  \alpha(-,x)\in P\}$.
 \end{center} 
To express our arguments, we need the following lemma and result from  \cite{Aliabad}.
\begin{lemma}  \label{prime15} {\rm \cite{Aliabad} }
For every 
$\alpha\in \mathcal{R}(L)$ and every prime ideal 
$P $ in $L$, the following statements are  true: 
\begin{enumerate} 
\item[{\rm (1)}]   $ P_{u}(\alpha)\cup   P_{l}(\alpha)=\mathbb{Q}$.
\item[{\rm (2)}]  Any element of $ P_{u}(\alpha)$ is an upper bound of $P_{l}(\alpha)$ and any element of $P_{l}(\alpha)$ is a lower bound of $ P_{u}(\alpha)$.
\item[{\rm (3)}]   ${\uparrow} P_{u}(\alpha)= P_{u}(\alpha)$ 
and  ${\downarrow} P_{l}(\alpha)= P_{l}(\alpha)$. 
\end{enumerate}
\end{lemma}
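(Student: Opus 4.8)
The plan is to derive all three statements from the ideal-theoretic structure of $P$ together with two order-theoretic facts about the action of the frame homomorphism $\alpha\colon\mathcal{L}(\mathbb{R})\to L$ on the rays $(x,-)$ and $(-,x)$. Recall that a frame homomorphism preserves arbitrary joins and finite meets, so in particular $\alpha$ is monotone with $\alpha(\top)=\top$ and $\alpha(\bot)=\bot$, and that a prime ideal $P$ is a proper down-set, closed under finite joins, with $a\wedge b\in P$ forcing $a\in P$ or $b\in P$. The two facts I would isolate first are: (i) $(x,-)\wedge(-,x)=\bot$ for every $x\in\mathbb{Q}$; and (ii) $(x,-)\vee(-,y)=\top$ whenever $x<y$ in $\mathbb{Q}$. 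Together with the monotonicity relations $(x',-)\le(x,-)$ and $(-,x)\le(-,x')$ for $x\le x'$, these control everything.

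Granting (i) and (ii), statements (1) and (2) are short. For (1), fix $x\in\mathbb{Q}$ and apply $\alpha$ to (i): since $\alpha$ preserves meets and the bottom, $\alpha(x,-)\wedge\alpha(-,x)=\alpha\big((x,-)\wedge(-,x)\big)=\bot\in P$. Primeness of $P$ then forces $\alpha(x,-)\in P$ or $\alpha(-,x)\in P$, i.e. $x\in P_{u}(\alpha)\cup P_{l}(\alpha)$; as $x$ was arbitrary, $P_{u}(\alpha)\cup P_{l}(\alpha)=\mathbb{Q}$. For (2) I argue by contradiction: suppose $x\in P_{u}(\alpha)$, $y\in P_{l}(\alpha)$ but $x<y$. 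Applying $\alpha$ to (ii) gives $\alpha(x,-)\vee\alpha(-,y)=\alpha(\top)=\top$. But $\alpha(x,-)$ and $\alpha(-,y)$ both lie in the ideal $P$, which is closed under joins, so $\top\in P$, contradicting that $P$ is proper. Hence every $y\in P_{l}(\alpha)$ satisfies $y\le x$ for every $x\in P_{u}(\alpha)$, which is exactly the assertion that each element of $P_{u}(\alpha)$ is an upper bound of $P_{l}(\alpha)$ and each element of $P_{l}(\alpha)$ is a lower bound of $P_{u}(\alpha)$.

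Statement (3) is a monotonicity argument using that $P$ is a down-set. For $P_{u}(\alpha)$: given $y\in P_{u}(\alpha)$ and $x\in\mathbb{Q}$ with $y\le x$, monotonicity gives $\alpha(x,-)\le\alpha(y,-)$ because $(x,-)\le(y,-)$; since $\alpha(y,-)\in P$ and $P$ is downward closed, $\alpha(x,-)\in P$, so $x\in P_{u}(\alpha)$. This shows $\uparrow P_{u}(\alpha)\subseteq P_{u}(\alpha)$, and the reverse inclusion is trivial, giving $\uparrow P_{u}(\alpha)=P_{u}(\alpha)$. Dually, for $y\in P_{l}(\alpha)$ and $x\le y$ one has $(-,x)\le(-,y)$, hence $\alpha(-,x)\le\alpha(-,y)\in P$ and $x\in P_{l}(\alpha)$, so $\downarrow P_{l}(\alpha)=P_{l}(\alpha)$.

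The only real work is establishing (i) and (ii) from the defining relations (R1)--(R4), and this is the step I expect to be the main obstacle. For (i), frame distributivity expands $(x,-)\wedge(-,x)$ as a join of meets $(r,s)\wedge(u,v)$ with $r,s>x$ and $u,v<x$; by (R1) each such meet equals $(r\vee u,\,s\wedge v)$, and since $r>x>v$ forces $r\vee u>s\wedge v$, every summand is degenerate and hence $\bot$, so the whole join is $\bot$. For (ii), it suffices by (R4) to bound each generator $(p,q)$ below $(x,-)\vee(-,y)$; choosing rationals $x<m'<m<y$ and using (R2) to write $(p,q)$, after the interpolation (R3), as a join of pieces lying either to the left of $y$ or to the right of $x$ exhibits $(p,q)\le(-,y)\vee(x,-)$. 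Both identities are standard facts about $\mathcal{L}(\mathbb{R})$ and are already recorded in \cite{Aliabad, Banaschewski1997-2}, so in the write-up I would either cite them or relegate their verification to a preliminary remark.
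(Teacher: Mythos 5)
Your argument is correct, but note that the paper does not actually prove this lemma at all: it is imported verbatim from \cite{Aliabad}, so there is no in-text proof to match. What you supply is a complete, self-contained derivation from first principles, and it is the natural one. The reduction of (1) to $(x,-)\wedge(-,x)=\bot$ via primeness, of (2) to $(x,-)\vee(-,y)=\top$ for $x<y$ via properness and closure of $P$ under finite joins, and of (3) to the monotonicity $(x,-)\le(y,-)$ for $y\le x$ together with $P$ being a down-set, are all sound; and you correctly identify that the only genuine content lies in the two lattice identities (i) and (ii) in $\mathcal{L}(\mathbb{R})$, which you verify (the summands $(r\vee u, s\wedge v)$ with $r>x>v$ are degenerate hence $\bot$ by (R3); the splitting of a generator $(p,q)$ across $x<m'<m<y$ via (R2) and (R3)) and which are indeed standard facts recorded in \cite{Banaschewski1997-2}. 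The one presentational caveat is that your verification of (ii) is only sketched; to be airtight you should note the degenerate cases ($p\ge m$, so $(p,q)\le(x,-)$ outright, and $q\le m'$, so $(p,q)\le(-,y)$ outright) before applying (R2) to the remaining case, since (R2) requires $p\le p\vee m'<q\wedge m\le q$. With that detail spelled out, your proof is a valid and arguably useful addition, since the paper leaves the reader to chase the reference for it.
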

\begin{corollary}   \label{prime16} {\rm \cite{Aliabad} }
Let $P $ be a prime ideal 
 of $L$ and let $\alpha\in \mathcal{R}(L)$ be given. Then the following statements are equivalent:
\begin{enumerate} 
\item[{\rm (1)}]  $ \inf P_{u}(\alpha)\in \mathbb{R}$.
\item[{\rm (2)}]   $P_{l}(\alpha)\neq\emptyset\neq P_{u}(\alpha)$.
\item[{\rm (3)}]  $ \sup P_{l}(\alpha)\in \mathbb{R}$.
\item[{\rm (4)}]  $ \inf P_{u}(\alpha)=\sup P_{l}(\alpha)\in \mathbb{R}$.     
\end{enumerate}
\end{corollary}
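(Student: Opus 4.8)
The whole statement is a formal consequence of the Dedekind-cut structure recorded in Lemma~\ref{prime15}, so the plan is to exploit that structure rather than to return to the frame $\mathcal{R}(L)$ itself. Writing $U:=P_{u}(\alpha)$ and $D:=P_{l}(\alpha)$ for brevity, Lemma~\ref{prime15} supplies three facts that I will use repeatedly: $U$ is an up-set and $D$ a down-set in $\mathbb{Q}$ (part (3)); together they exhaust $\mathbb{Q}$, that is $U\cup D=\mathbb{Q}$ (part (1)); and every element of $U$ dominates every element of $D$ (part (2)). This last property is exactly what lets suprema and infima be compared.

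I would first dispose of the equivalence (1)~$\Leftrightarrow$~(2). If $\inf U\in\mathbb{R}$, then in particular $U\neq\emptyset$ (otherwise $\inf U=+\infty$) and $U$ is bounded below (otherwise $\inf U=-\infty$). Were $D$ empty, part (1) of Lemma~\ref{prime15} would force $U=\mathbb{Q}$, whence $\inf U=-\infty\notin\mathbb{R}$, a contradiction; so $D\neq\emptyset$ and (2) holds. Conversely, if both $U$ and $D$ are nonempty, pick any $y\in D$; by part (2) of the lemma $y$ is a lower bound of the nonempty set $U$, so $\inf U$ is a real number. The equivalence (2)~$\Leftrightarrow$~(3) is proved by the mirror-image argument, interchanging the roles of $U$ and $D$ and of $\inf$ and $\sup$.

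It then remains to upgrade (2) to (4); the reverse implications (4)~$\Rightarrow$~(1) and (4)~$\Rightarrow$~(3) are immediate. Assume (2). From part (2) of Lemma~\ref{prime15} every element of $D$ is a lower bound of $U$, so $\sup D\leq\inf U$, and both quantities are real by the equivalences just established. The crux is to exclude the strict inequality $\sup D<\inf U$: if it held, density of $\mathbb{Q}$ would furnish a rational $q$ with $\sup D<q<\inf U$, and then $q\notin D$ (it exceeds $\sup D$, yet $D$ is a down-set) and $q\notin U$ (it lies below $\inf U$, yet $U$ is an up-set), contradicting $U\cup D=\mathbb{Q}$. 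Hence $\sup D=\inf U\in\mathbb{R}$, which is (4).

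I expect this final density step to be the only place demanding any care: everything else is bookkeeping with upper and lower bounds, but the passage from ``$\sup D\leq\inf U$'' to equality is precisely where the covering property $U\cup D=\mathbb{Q}$ must be invoked, and it is the one ingredient that genuinely distinguishes a Dedekind cut from an arbitrary pairing of a bounded-above down-set with a bounded-below up-set.
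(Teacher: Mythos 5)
Your proof is correct. Note that the paper itself offers no argument for this corollary: it is quoted verbatim from \cite{Aliabad} and stated without proof, so there is no in-paper derivation to compare against. Your blind reconstruction is a complete and self-contained deduction from Lemma~\ref{prime15}: the cycle (1)$\Leftrightarrow$(2)$\Leftrightarrow$(3), (2)$\Rightarrow$(4), (4)$\Rightarrow$(1) covers all four equivalences, the use of the conventions $\inf\emptyset=+\infty$ and $\inf\mathbb{Q}=-\infty$ is the standard reading of ``$\inf P_{u}(\alpha)\in\mathbb{R}$'', and you correctly isolate the one nontrivial ingredient --- passing from $\sup P_{l}(\alpha)\le\inf P_{u}(\alpha)$ to equality via density of $\mathbb{Q}$ and the covering property $P_{u}(\alpha)\cup P_{l}(\alpha)=\mathbb{Q}$. (The parenthetical appeals to the up-set/down-set property in that step are not actually needed; $q>\sup P_{l}(\alpha)$ alone already excludes $q$ from $P_{l}(\alpha)$.) This is exactly the Dedekind-cut argument one would expect the cited source to contain.
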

\begin{corollary}   \label{prime17} 
Let $P $ be a prime ideal 
 of $L$ and let  $\alpha\in \mathcal{R}(L)$ be given. 
Then $ \inf P_{u}(\alpha)\in \mathbb{Q}$ if and only if $ \inf P_{u}(\alpha)\in  P_{u}(\alpha)  $ if and only if 
 $ \left|P_{u}(\alpha)\cap   P_{l}(\alpha)\right|=1$.
\end{corollary}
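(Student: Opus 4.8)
The plan is to prove the cyclic chain of implications $(\mathrm{ii})\Rightarrow(\mathrm i)\Rightarrow(\mathrm{iii})\Rightarrow(\mathrm{ii})$, where $(\mathrm i)$, $(\mathrm{ii})$, $(\mathrm{iii})$ denote the three displayed conditions in the order written. Before starting I would record two structural facts drawn from Lemma~\ref{prime15}: that $P_{u}(\alpha)$ is an up-set and $P_{l}(\alpha)$ a down-set of $\mathbb Q$ (part (3)), and that $P_{u}(\alpha)\cap P_{l}(\alpha)$ has at most one element. The latter follows from part (2): if $x<y$ both lay in $P_{u}(\alpha)\cap P_{l}(\alpha)$, then $x\in P_{u}(\alpha)$ would be an upper bound of $P_{l}(\alpha)\ni y$, forcing $y\le x$, a contradiction. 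Granting this, $(\mathrm{iii})\Rightarrow(\mathrm{ii})$ is quick: if $P_{u}(\alpha)\cap P_{l}(\alpha)=\{d\}$, then both sets are nonempty, so Corollary~\ref{prime16} gives $\inf P_{u}(\alpha)=\sup P_{l}(\alpha)\in\mathbb R$; since $d\in P_{u}(\alpha)$ forces $d\ge\inf P_{u}(\alpha)$ and $d\in P_{l}(\alpha)$ forces $d\le\sup P_{l}(\alpha)$, we get $d=\inf P_{u}(\alpha)$, whence $\inf P_{u}(\alpha)\in P_{u}(\alpha)$. The implication $(\mathrm{ii})\Rightarrow(\mathrm i)$ is immediate because $P_{u}(\alpha)\subseteq\mathbb Q$.

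The heart of the argument is $(\mathrm i)\Rightarrow(\mathrm{iii})$. Write $c:=\inf P_{u}(\alpha)$ and assume $c\in\mathbb Q$. Then $c\in\mathbb R$, so by Corollary~\ref{prime16} both $P_{u}(\alpha)$ and $P_{l}(\alpha)$ are nonempty and $c=\inf P_{u}(\alpha)=\sup P_{l}(\alpha)$. I would first establish two membership preliminaries: for every rational $q>c$ one has $\alpha(q,-)\in P$, and for every rational $p<c$ one has $\alpha(-,p)\in P$. Indeed, $q>c=\inf P_{u}(\alpha)$ yields some $x\in P_{u}(\alpha)$ with $x<q$; then $(q,-)\le(x,-)$ gives $\alpha(q,-)\le\alpha(x,-)\in P$, and since $P$ is a down-set, $\alpha(q,-)\in P$. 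Symmetrically, $p<c=\sup P_{l}(\alpha)$ produces $t\in P_{l}(\alpha)$ with $p<t$, and as $P_{l}(\alpha)$ is a down-set, $p\in P_{l}(\alpha)$, i.e. $\alpha(-,p)\in P$.

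The final step is to pass from these ``strict'' members to the members at $c$ itself. From the defining relations of $\mathcal L(\mathbb R)$ one checks the joins
\[
\alpha(c,-)=\bigvee_{q\in\mathbb Q,\,q>c}\alpha(q,-)\qquad\text{and}\qquad\alpha(-,c)=\bigvee_{p\in\mathbb Q,\,p<c}\alpha(-,p),
\]
each being a join of elements already shown to lie in $P$. Concluding that the two joins themselves lie in $P$ then gives $\alpha(c,-)\in P$ and $\alpha(-,c)\in P$, that is, $c\in P_{u}(\alpha)\cap P_{l}(\alpha)$; combined with the bound $|P_{u}(\alpha)\cap P_{l}(\alpha)|\le 1$ recorded above, this upgrades to $|P_{u}(\alpha)\cap P_{l}(\alpha)|=1$, which is $(\mathrm{iii})$.

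I expect the main obstacle to be precisely this last passage through the join. Working only with finite operations one gets less: from $\alpha(-,c)\wedge\alpha(c,-)=\bot\in P$ and primeness one recovers just $c\in P_{u}(\alpha)\cup P_{l}(\alpha)$ (reproving Lemma~\ref{prime15}(1)), and the finite relation $\alpha(c,-)=\alpha(c,d)\vee\alpha(c',-)$ with $c<c'<d$ only localizes the question near $c$ without resolving it. Promoting ``$c$ lies in one of $P_{u}(\alpha),P_{l}(\alpha)$'' to ``$c$ lies in both'' genuinely requires that $P$ absorb the directed joins $\bigvee_{q>c}\alpha(q,-)$ and $\bigvee_{p<c}\alpha(-,p)$, which is where the role of $P$ as a point-type prime ideal (equivalently a prime ideal ${\downarrow}\,\mathfrak p$ for a prime element $\mathfrak p$, closed under all joins) is essential: every $\alpha(q,-)\le\mathfrak p$ forces $\bigvee_{q>c}\alpha(q,-)\le\mathfrak p$. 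I would therefore isolate the closure of $P$ under these particular joins as the one substantive lemma, after which $(\mathrm i)\Rightarrow(\mathrm{iii})$, and hence the whole equivalence, follows.
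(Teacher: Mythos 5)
Your preliminary observations and the two easy implications are correct: $\left|P_{u}(\alpha)\cap P_{l}(\alpha)\right|\le 1$ follows from Lemma~\ref{prime15}(2) exactly as you argue, (iii)$\Rightarrow$(ii) follows via Corollary~\ref{prime16}, and (ii)$\Rightarrow$(i) is trivial; this is in effect all the paper itself supplies, since its proof of the corollary is the single line ``By Corollary~\ref{prime16}, it is evident.'' The genuine gap is exactly where you flagged it, in (i)$\Rightarrow$(iii): to pass from $\alpha(q,-)\in P$ for all rationals $q>c$ to $\alpha(c,-)=\bigvee_{q>c}\alpha(q,-)\in P$ you declare $P$ to be a principal prime ideal ${\downarrow}\,\mathfrak p$ for a prime element $\mathfrak p$, hence closed under all joins. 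That is an additional hypothesis, not a reformulation of ``prime ideal'': in this paper a prime ideal of $L$ is a prime ideal of the underlying distributive lattice, and the authors explicitly treat closure under countable joins as an \emph{extra} condition (see the sentence preceding Proposition~\ref{prime20}, and Lemma~\ref{prime35}(2), where ``countably $\vee$-complete'' is assumed separately). A general prime ideal need not absorb these directed joins. Indeed, taking $L=\mathcal O(\mathbb R)$ and $\alpha$ induced by the identity, the prime ideal separation theorem yields a prime ideal $P$ containing $(0,+\infty)$ and every $(-\infty,q)$ with $q<0$ but disjoint from the filter generated by $(-\infty,0)$; for this $P$ one gets $P_{u}(\alpha)=[0,+\infty)\cap\mathbb Q$ and $P_{l}(\alpha)=(-\infty,0)\cap\mathbb Q$, so $\inf P_{u}(\alpha)=0\in P_{u}(\alpha)$ while $P_{u}(\alpha)\cap P_{l}(\alpha)=\emptyset$. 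So your argument for (i)$\Rightarrow$(iii) does not go through under the stated hypotheses, and no argument can, without strengthening them.

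What survives without any completeness assumption on $P$ is the chain (iii)$\Rightarrow$(ii)$\Rightarrow$(i), together with the fact (from Lemma~\ref{prime15}(1)) that a rational $c=\inf P_{u}(\alpha)$ lies in $P_{u}(\alpha)\cup P_{l}(\alpha)$; promoting membership in one of the two sets to membership in both is precisely the missing content, and it is also not recoverable from Corollary~\ref{prime16} alone. To make your proof complete you would have to add the hypothesis that $P$ is countably $\vee$-complete (or of the form ${\downarrow}\,\mathfrak p$), under which your join computation is correct and the whole cycle closes; as written, the proposal establishes strictly less than the corollary claims, and the substantive lemma you propose to isolate is false for the class of ideals the statement actually quantifies over.
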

\begin{proof}
By Corollary    \ref{prime16}, it is evident.  
\end{proof}
\begin{lemma}    \label{prime21} 
Let $P $   be a prime ideal 
 of $L$. If $e$   be an idempotent element of  $ \mathcal{R}(L)$, then  the following statements are  true: 
 \begin{enumerate} 
\item[{\rm (1)}]   
 $\inf  P_{u}(e)= \sup P_{l} (e)\in  P_{u}(e)\cap   P_{l}(e) $ and  
\[ \begin{aligned}  
\inf  P_{u}(e)=  
\begin{cases}
0,& {{\mathrm{coz}}}(e)\in P,\\
1,& {{\mathrm{coz}}}(e)\not\in P.
\end{cases}
 \end{aligned} \]
 \item[{\rm (2)}] For every $k\in \mathbb{Q}$, 
  $\inf  P_{u}(ke)=k\inf  P_{u}(e)$.
 \end{enumerate}
\end{lemma}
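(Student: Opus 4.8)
The plan is to compute the sets $P_{u}(e)$ and $P_{l}(e)$ explicitly by feeding the three-case descriptions of $e(x,-)$ and $e(-,x)$ from Remark \ref{remark3}(1) through the membership test for the prime ideal $P$. The two ingredients I would isolate first are order-theoretic facts about $P$: since $P$ is a proper ideal, $\bot\in P$ and $\top\notin P$; and since $e\in B(\mathcal{R}(L))$, the element $\mathrm{coz}(e)$ is complemented (as recalled at the start of Section 3), so $\mathrm{coz}(e)\wedge(\mathrm{coz}(e))'=\bot\in P$ while $\mathrm{coz}(e)\vee(\mathrm{coz}(e))'=\top\notin P$. Primeness then forces exactly one of $\mathrm{coz}(e)$ and $(\mathrm{coz}(e))'$ to lie in $P$; this single dichotomy drives the whole argument.

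For part (1), I would read off $P_{u}(e)$ from Remark \ref{remark3}(1): for $x<0$ the value $e(x,-)=\top\notin P$, so $x\notin P_{u}(e)$; for $x\geq 1$ the value is $\bot\in P$, so $x\in P_{u}(e)$; and for $0\leq x<1$ the value $\mathrm{coz}(e)$ puts $x$ in $P_{u}(e)$ precisely when $\mathrm{coz}(e)\in P$. Hence $P_{u}(e)=\mathbb{Q}\cap[0,\infty)$ when $\mathrm{coz}(e)\in P$ and $P_{u}(e)=\mathbb{Q}\cap[1,\infty)$ otherwise, giving $\inf P_{u}(e)=0$ (resp.\ $1$) as an actual member of $P_{u}(e)$. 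The symmetric computation of $P_{l}(e)$ from the $e(-,x)$ formula, using the complementary membership of $(\mathrm{coz}(e))'$, yields $P_{l}(e)=\mathbb{Q}\cap(-\infty,0]$ (resp.\ $\mathbb{Q}\cap(-\infty,1]$), so $\sup P_{l}(e)=0$ (resp.\ $1$) is attained in $P_{l}(e)$. Matching the two cases establishes $\inf P_{u}(e)=\sup P_{l}(e)\in P_{u}(e)\cap P_{l}(e)$ together with the stated value; alternatively one could cite Corollary \ref{prime16} to obtain the equality once one side is computed.

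For part (2), I would split on the sign of $k$ and repeat the computation using parts (2) and (3) of Remark \ref{remark3} in place of part (1), while keeping the same dichotomy on $P$. When $k>0$, the function $ke(x,-)$ equals $\top$, $\mathrm{coz}(e)$, $\bot$ on $x<0$, $0\leq x<k$, $x\geq k$, so $P_{u}(ke)=\mathbb{Q}\cap[0,\infty)$ or $\mathbb{Q}\cap[k,\infty)$ according as $\mathrm{coz}(e)\in P$ or not, giving $\inf P_{u}(ke)=0=k\cdot 0$ or $k=k\cdot 1$; in both subcases this equals $k\inf P_{u}(e)$ by part (1). When $k<0$, the roles of $\mathrm{coz}(e)$ and $(\mathrm{coz}(e))'$ swap in the formula for $ke(x,-)$, and the same bookkeeping gives $\inf P_{u}(ke)\in\{0,k\}$ matching $k\inf P_{u}(e)$. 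The degenerate case $k=0$ is immediate, since $0\cdot e=\mathbf{0}$ has $P_{u}(\mathbf{0})=\mathbb{Q}\cap[0,\infty)$ and $k\inf P_{u}(e)=0$.

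I do not expect a genuine obstacle: the argument is a finite case analysis once the complementation dichotomy is in hand. The only points demanding care are verifying that the infimum and supremum are actually attained, so that they lie in $P_{u}(e)\cap P_{l}(e)$ rather than merely bounding it, which holds because the endpoints $0$ and $1$ fall in the ``$\geq$'' and ``$\leq$'' branches of the case descriptions; and handling $k=0$ separately, since Remark \ref{remark3}(2),(3) assume $k\neq 0$.
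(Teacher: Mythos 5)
Your proposal is correct and follows essentially the same route as the paper: part (1) is the identical dichotomy on whether $\mathrm{coz}(e)$ or $(\mathrm{coz}(e))'$ lies in $P$, read off through Remark \ref{remark3}(1), and part (2) is the same sign-split on $k$ (the paper phrases it as the change of variables $ke(x,-)\in P \Leftrightarrow e(\frac{x}{k},-)\in P$ rather than reading Remark \ref{remark3}(2),(3) directly, but this is the same computation). Your explicit treatment of $k=0$ is a small point the paper omits.
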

\begin{proof} (1). 
It is evident that 
 ${{\mathrm{coz}}}(e)\in P$ or $\big({{\mathrm{coz}}}(e) \big)'\in P$. 
 If ${{\mathrm{coz}}}(e)\in P$, then 
\begin{center}  
  $ P_{u}(e)=\{ x\in \mathbb{Q}\mid x\geq 0\}$\,\,\,\,and\,\,\,\, $ P_{l} (e)=\{x\in \mathbb{Q}\mid x\leq 0\}$.
 \end{center} 
 Therefore,  
\[ \begin{aligned}  
\inf  P_{u}(e)= \sup P_{l} (e)=0\in   P_{u}(e)\cap   P_{l}(e).
\end{aligned} \]
If  $\big({{\mathrm{coz}}}(e) \big)'\in P$,  then 
\begin{center}  
  $ P_{u}(e)=\{ x\in \mathbb{Q}\mid x\geq 1\}$ and $ P_{l} (e)=\{x\in \mathbb{Q}\mid x\leq 1\}$.
 \end{center} 
Hence,  
\[ \begin{aligned}  
\inf  P_{u}(e)= \sup P_{l} (e)=1\in   P_{u}(e)\cap   P_{l}(e).
\end{aligned} \]
(2). Let  $k\in \mathbb{Q}$ with $k>0$ be given. Then 
\[ \begin{aligned}    
x\in  P_{u}(ke)
\Leftrightarrow ke(x,-)\in P
\Leftrightarrow  e(\frac{x}{k},-)\in P
\Leftrightarrow x\in k P_{u}(e) ,
\end{aligned} \]
which implies that 
  $\inf  P_{u}(ke)=k\inf  P_{u}(e)$.

Let  $k\in \mathbb{Q}$ with $k<0$ be given. Then 
\[ \begin{aligned}    
x\in  P_{u}(ke)
\Leftrightarrow ke(x,-)\in P
\Leftrightarrow  e(-,\frac{x}{k})\in P
\Leftrightarrow x\in k P_{l}(e) ,
\end{aligned} \]
which implies that 
\[ \begin{aligned} 
  \inf  P_{u}(ke)=
  \inf  k P_{l}(e) =k \sup P_{l}(e)=
  k\inf  P_{u}(e).
 \end{aligned} \] 
\end{proof}
In \cite[Theorem 2.23]{Aliabad}, the prime ideal
$P$ is considered as  countably $\bigvee$-complete, which here, since our discussion is about idempotent elements, we are able to remove this condition and express it as follows.
\begin{proposition}    \label{prime20}
Let $P $   be a prime ideal 
 of $L$.   If $e_1, e_2\in B(\mathcal{R}(L))$, then 
$ 
\inf  P_{u}(e_1\diamond e_2)= \sup P_{l} (e_1\diamond e_2) \in   P_{u}(e_1\diamond e_2)\cap   P_{l}(e_1\diamond e_2) 
$
and
$\inf  P_{u}(e_1\diamond e_2)=\inf  P_{u}(e_1)\diamond\inf  P_{u}(e_2)$
for every $\diamond\in\{ +, \vee, \wedge, \cdot\}.$ 
\end{proposition}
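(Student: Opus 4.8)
The plan is to treat each operation $\diamond\in\{+,\cdot,\vee,\wedge\}$ separately, in every case reading off the set $P_u(e_1\diamond e_2)$ directly from the explicit descriptions of $(e_1\diamond e_2)(x,-)$ furnished by Lemmas \ref{prime5}, \ref{prime80}, and \ref{prime60}, and then locating its infimum. The only properties of $P$ I will use are that it is a proper ideal (so $\top\notin P$ while $\bot\in P$), that it is down-closed and closed under finite joins, and that it is prime; combined with the fact (recalled at the start of Section 3) that each $\mathrm{coz}(e_i)$ is complemented, these yield the bookkeeping rules
\begin{align*}
&\mathrm{coz}(e_1)\vee\mathrm{coz}(e_2)\in P \iff \mathrm{coz}(e_1)\in P \text{ and } \mathrm{coz}(e_2)\in P, \\
&\mathrm{coz}(e_1)\wedge\mathrm{coz}(e_2)\in P \iff \mathrm{coz}(e_1)\in P \text{ or } \mathrm{coz}(e_2)\in P,
\end{align*}
together with $(\mathrm{coz}(e_i))'\in P\iff \mathrm{coz}(e_i)\notin P$ (primeness applied to $\mathrm{coz}(e_i)\wedge(\mathrm{coz}(e_i))'=\bot$). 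By Lemma \ref{prime21}(1), writing $a_i:=\inf P_u(e_i)$, we have $a_i=0$ precisely when $\mathrm{coz}(e_i)\in P$ and $a_i=1$ otherwise, so the whole computation splits according to whether each $\mathrm{coz}(e_i)$ lies in $P$.

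For the three lattice-type operations $\diamond\in\{\cdot,\vee,\wedge\}$ the description of $(e_1\diamond e_2)(x,-)$ has only one nontrivial level: it equals $\top$ below $0$, a single meet or join of cozeros on $[0,1)$, and $\bot$ from $1$ onward. Using the bookkeeping rules, that middle value lies in $P$ exactly in the expected cases, so $P_u(e_1\diamond e_2)$ is either $\{x\in\mathbb{Q}:x\ge 0\}$ or $\{x\in\mathbb{Q}:x\ge 1\}$, whence $\inf P_u(e_1\diamond e_2)\in\{0,1\}$. A short check of the four configurations of $(\mathrm{coz}(e_1),\mathrm{coz}(e_2))$ shows this infimum agrees with $a_1\diamond a_2$; for the product, for example, the infimum is $0$ iff $\mathrm{coz}(e_1)\wedge\mathrm{coz}(e_2)\in P$ iff some $\mathrm{coz}(e_i)\in P$ iff $a_1a_2=0$. (Equivalently one may observe that $e_1\diamond e_2$ is again idempotent and invoke Lemma \ref{prime21}(1) after identifying $\mathrm{coz}(e_1\diamond e_2)$.)

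The addition case is the one requiring genuine care, since $e_1+e_2$ is no longer idempotent and Lemma \ref{prime21} does not apply. Here Lemma \ref{prime5} exhibits three nontrivial levels: $\mathrm{coz}(e_1)\vee\mathrm{coz}(e_2)$ on $[0,1)$, $\mathrm{coz}(e_1)\wedge\mathrm{coz}(e_2)$ on $[1,2)$, and $\bot$ from $2$ onward. Running the same bookkeeping, the first level enters $P$ only when both cozeros lie in $P$, and the second only when at least one does; thus $\inf P_u(e_1+e_2)$ equals $0$, $1$, or $2$ according as two, one, or none of the $\mathrm{coz}(e_i)$ belong to $P$, which is exactly $a_1+a_2$ in each case. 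Matching the two quantities across the middle interval $[1,2)$ is the crux of the argument and the step I expect to be the main obstacle.

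Finally, in every case the infimum produced is an integer, hence rational; by Corollary \ref{prime17} this forces $\inf P_u(e_1\diamond e_2)\in P_u(e_1\diamond e_2)$ and $|P_u(e_1\diamond e_2)\cap P_l(e_1\diamond e_2)|=1$, and by Corollary \ref{prime16} it follows that $\inf P_u(e_1\diamond e_2)=\sup P_l(e_1\diamond e_2)$. Since $P_u$ is an up-set and $P_l$ a down-set by Lemma \ref{prime15}(3), this common value is precisely the unique element of $P_u(e_1\diamond e_2)\cap P_l(e_1\diamond e_2)$, which gives the membership assertion and, together with the computations above, completes the proof for all four operations.
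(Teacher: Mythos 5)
Your proof is correct and follows essentially the same route as the paper's: both split into the four cases according to whether each $\mathrm{coz}(e_i)$ lies in $P$, read off $P_u(e_1\diamond e_2)$ from the explicit level descriptions in Lemmas \ref{prime5}, \ref{prime80} and \ref{prime60} using primeness of $P$, and then invoke Corollaries \ref{prime16} and \ref{prime17} for the $\sup/\inf$ and membership assertions. The only difference is presentational: the paper writes out only the case $\diamond=+$ and declares the rest similar, whereas you treat all four operations explicitly.
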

\begin{proof}
We only prove it for $\diamond=+$; the rest will be proved similarly. It is evident that \linebreak $P_{u}(e_1+e_2)\cap (-\infty, 0)=\emptyset$ 
and $P_{l}(e_1+e_2)\cap (0,+\infty)=\emptyset$. 

 Case (1): If   ${{\mathrm{coz}}}(e_1)\in P$ and ${{\mathrm{coz}}}(e_2) \in P$, then 
  ${{\mathrm{coz}}}(e_1+e_2)={{\mathrm{coz}}}(e_1)\vee {{\mathrm{coz}}}(e_2) \in P,$
  which implies that 
  $P_{u}(e_1+e_2)=\{x\in \mathbb{Q}\mid x\geq 0\}$ 
  and 
   $P_{l}(e_1+e_2)=\{x\in \mathbb{Q}\mid x\leq 0\}$.  
  Therefore,    
\[ \begin{aligned}  
\inf  P_{u}(e_1+e_2)= \sup P_{l} (e_1+e_2)=0\in   P_{u}(e_1+e_2)\cap   P_{l}(e_1+e_2)
\end{aligned} \]
and 
$\inf  P_{u}(e_1+e_2)=\inf  P_{u}(e_1)+\inf  P_{u}(e_2).
$

  Case (2): If   ${{\mathrm{coz}}}(e_1)\in P$ and $\big({{\mathrm{coz}}}(e_2) \big)'\in P$, then 
 ${{\mathrm{coz}}}(e_1+e_2) \vee \big({{\mathrm{coz}}}(e_2) \big)'=\top$, 
 which implies that 
 ${{\mathrm{coz}}}(e_1+e_2) \not \in P$. 
 Hence, 
  $P_{u}(e_1+e_2)\cap [0, 1)\cap \mathbb{Q}=\emptyset$. 
  Since \linebreak${{\mathrm{coz}}}(e_1)\wedge{{\mathrm{coz}}}(e_2)\leq  {{\mathrm{coz}}}(e_1)\in P$, we conclude that  
  $[1,2)\cap \mathbb{Q}\subseteq P_{u}(e_1+e_2)$. 
  It is evident that $[2,+\infty)  \subseteq P_{u}(e_1+e_2)$ and implies that  $ P_{u}(e_1+e_2)=[1,+\infty)\cap \mathbb{Q}$. 
  A similar proof shows that  $ P_{l}(e_1+e_2)=(-\infty, 1]\cap \mathbb{Q}$. 
  Therefore,    
\[ \begin{aligned}  
\inf  P_{u}(e_1+e_2)= \sup P_{l} (e_1+e_2)=1\in   P_{u}(e_1+e_2)\cap   P_{l}(e_1+e_2) 
\end{aligned} \]
and 
$\inf  P_{u}(e_1+e_2)=\inf  P_{u}(e_1)+\inf  P_{u}(e_2).
$

   Case (3): Let   $\big({{\mathrm{coz}}}(e_1) \big)'\in P$  and let ${{\mathrm{coz}}}(e_2)\in P$. 
  Similar to the proof of  Case (2), we have  
   \[ \begin{aligned}  
\inf  P_{u}(e_1+e_2)= \sup P_{l} (e_1+e_2)=1\in   P_{u}(e_1+e_2)\cap   P_{l}(e_1+e_2) 
\end{aligned} \]
and 
$\inf  P_{u}(e_1+e_2)=\inf  P_{u}(e_1)+\inf  P_{u}(e_2).
$

    Case (4): Let   $\big({{\mathrm{coz}}}(e_1) \big)'\in P$   and let  $\big({{\mathrm{coz}}}(e_2) \big)'\in P$. We have 
\[ \begin{aligned}  
   \big({{\mathrm{coz}}}(e_1+e_2) \big)'=\big({{\mathrm{coz}}}(e_1) \big)'\wedge  \big({{\mathrm{coz}}}(e_2) \big)'\in P 
&\Rightarrow {{\mathrm{coz}}}(e_1+e_2)\not\in P \\
&\Rightarrow P_{u}(e_1+e_2)\cap [0, 1)\cap \mathbb{Q}=\emptyset. 
\end{aligned} \] 
  If   
${{\mathrm{coz}}}(e_1)\wedge{{\mathrm{coz}}}(e_2)\in P$, 
then ${{\mathrm{coz}}}(e_1)\in P$ or  
${{\mathrm{coz}}}(e_2)\in P$, which implies that \linebreak
$\top={{\mathrm{coz}}}(e_1)\vee \big({{\mathrm{coz}}}(e_1) \big)'\in P$ 
or 
$\top={{\mathrm{coz}}}(e_2)\vee \big({{\mathrm{coz}}}(e_2) \big)'\in P$,  a contradiction. Hence, 
$P_{u}(e_1+e_2)\cap [1, 2)\cap \mathbb{Q}=\emptyset  $, 
which follows that 
$P_{u}(e_1+e_2)= [2, +\infty)\cap \mathbb{Q}.$ 
  A similar proof shows that  $ P_{l}(e_1+e_2)=(-\infty, 2]\cap \mathbb{Q}$.   
  Therefore,    
\[ \begin{aligned}  
\inf  P_{u}(e_1+e_2)= \sup P_{l} (e_1+e_2)=2\in   P_{u}(e_1+e_2)\cap   P_{l}(e_1+e_2) 
\end{aligned} \]
and 
$\inf  P_{u}(e_1+e_2)=\inf  P_{u}(e_1)+\inf  P_{u}(e_2).
$
\end{proof}
\begin{corollary} \label{prime90}
Let 
$P $   
be a prime ideal  of 
$L$.   
Suppose that $e_1, e_2\in B(\mathcal{R}(L))$.
If  \linebreak 
$x=\inf P_{u}(e_1\diamond e_2)$, 
then 
$x\in R_{e_1\diamond e_2}$
for every $\diamond\in\{\cdot, \vee, \wedge\}.$
\end{corollary}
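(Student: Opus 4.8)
The plan is to avoid any case analysis on $\diamond$ and instead reduce the whole statement to the single observation that $x$ belongs to \emph{both} $P_u(e_1\diamond e_2)$ and $P_l(e_1\diamond e_2)$. Write $\alpha:=e_1\diamond e_2$ and $x:=\inf P_u(\alpha)$. First I would quote Proposition \ref{prime20}, which yields simultaneously that
\[
x=\inf P_u(\alpha)=\sup P_l(\alpha)\in P_u(\alpha)\cap P_l(\alpha)
\]
and that $x=\inf P_u(e_1)\diamond\inf P_u(e_2)$. Since Lemma \ref{prime21}(1) forces each $\inf P_u(e_i)\in\{0,1\}$ and $\{0,1\}$ is closed under $\cdot$, $\vee$ and $\wedge$, we get $x\in\{0,1\}\subseteq\mathbb{Q}$; in particular $x$ is rational, so the elements $\alpha(x,-)$ and $\alpha(-,x)$ are well defined, and the membership just displayed unwinds, by the definitions of $P_u$ and $P_l$, to $\alpha(x,-)\in P$ and $\alpha(-,x)\in P$.

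The second ingredient I would set up is the translation identity for cozero elements: for every $r\in\mathbb{Q}$,
\[
\mathrm{coz}(\alpha-r)=(\alpha-r)(-,0)\vee(\alpha-r)(0,-)=\alpha(-,r)\vee\alpha(r,-).
\]
I would obtain this from the defining formula of the operation $+$ on $\mathcal{R}(L)$ applied to $\alpha$ and the constant $r$: because the constant contributes $\top$ precisely on the scale intervals straddling $r$, one checks $(\alpha-r)(p,q)=\alpha(p+r,q+r)$ for all $p,q\in\mathbb{Q}$, and then passes to the joins defining $(-,0)$ and $(0,-)$.

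With these two facts in hand the conclusion is immediate. Taking $r=x$ and using that a prime ideal is an ideal, hence closed under finite joins, the relations $\alpha(x,-)\in P$ and $\alpha(-,x)\in P$ give
\[
\mathrm{coz}(\alpha-x)=\alpha(-,x)\vee\alpha(x,-)\in P.
\]
As $P$ is proper we have $\top\notin P$, whence $\mathrm{coz}(\alpha-x)\neq\top$; by the definition of $R_\alpha$ this is exactly the assertion $x\in R_{e_1\diamond e_2}$.

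The step I expect to be the real work is establishing the translation identity $\mathrm{coz}(\alpha-r)=\alpha(-,r)\vee\alpha(r,-)$ rigorously in the pointfree language, i.e. verifying $(\alpha-r)(p,q)=\alpha(p+r,q+r)$ from the $f$-ring operation; the remaining steps are a direct appeal to Proposition \ref{prime20} and Lemma \ref{prime21} together with the elementary facts that $P$ is proper and join-closed. As an alternative that trades this computation for a little algebra, one may note that $e_1\diamond e_2$ is again idempotent for $\diamond\in\{\cdot,\vee,\wedge\}$ and then argue by the cases $x=0$ (where $\mathrm{coz}(\alpha)\in P$ already gives $\mathrm{coz}(\alpha)\neq\top$) and $x=1$ (where $\mathrm{coz}(\alpha)\notin P$ gives $\mathrm{coz}(\alpha)\neq\bot$, so its complement $\mathrm{coz}(\alpha-1)=\big(\mathrm{coz}(\alpha)\big)'$ is not $\top$); I prefer the uniform argument above, since it needs neither the idempotency of $\alpha$ nor the identity $\mathrm{coz}(\alpha-1)=\big(\mathrm{coz}(\alpha)\big)'$.
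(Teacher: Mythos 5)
Your proof is correct and follows essentially the same route as the paper, which simply cites Proposition \ref{prime20} and leaves implicit exactly the unwinding you spell out (namely that $x\in P_u(\alpha)\cap P_l(\alpha)$ gives $\mathrm{coz}(\alpha-x)=\alpha(-,x)\vee\alpha(x,-)\in P$, hence $\neq\top$; the paper itself performs this step explicitly in the proof of Lemma \ref{prime35}(3)). The translation identity you flag as the main work is standard and is used throughout the paper, so no further justification is needed.
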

\begin{proof}
By Proposition \ref{prime20},  it is obvious.
\end{proof}
We recall that for every $p\in \mathbb{R},$
\[ \begin{aligned}
  \mathbf{p}(-,r)=  
  \begin{cases}
\bot, & r\leq p, \\
\top,   & r > p,
\end{cases} \,\,\, \text{and} \,\,\, \,\,\,
  \mathbf{p}(r,-)=  
  \begin{cases}
\top ,& r< p ,\\
\bot,   & r  \geq p.
\end{cases}
 \end{aligned} \]
A function $f(x)$ is said to be bounded on a given interval  if there exist two numbers $M$ and $N$
such that: $M\leq f(x)\leq N.$  Also,    $\mathcal{R}^*(L)$ is the set of bounded elements of $\mathcal{R}(L).$ 
\begin{lemma}   \label{prime25}
Let $P $   be a prime ideal 
 of $L$.  If $\alpha\in  \mathcal{R}^*(L)$, then 
 $  
\inf  P_{u}(\alpha)= \sup P_{l} (\alpha).  
$ 
\end{lemma}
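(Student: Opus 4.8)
The plan is to lean entirely on Corollary \ref{prime16}, which says that for a prime ideal $P$ and $\alpha\in\mathcal{R}(L)$ the equality $\inf P_{u}(\alpha)=\sup P_{l}(\alpha)\in\mathbb{R}$ holds exactly when both $P_{u}(\alpha)$ and $P_{l}(\alpha)$ are nonempty. Thus the entire statement reduces to showing that the boundedness of $\alpha$ forces $P_{l}(\alpha)\neq\emptyset\neq P_{u}(\alpha)$, and then quoting Corollary \ref{prime16} verbatim.

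First I would unwind the definition of $\mathcal{R}^*(L)$: since $\alpha$ is bounded, there are numbers $M\leq N$ with $\mathbf{M}\leq\alpha\leq\mathbf{N}$ in the natural order of $\mathcal{R}(L)$. Here I use the order convention already employed in the proof of Lemma \ref{prime5} (where $\alpha\geq\mathbf{0}$ was used to force $\alpha(x,-)=\top$ for $x<0$), namely that $\alpha\leq\beta$ means $\alpha(x,-)\leq\beta(x,-)$ for every $x\in\mathbb{Q}$, equivalently $\beta(-,x)\leq\alpha(-,x)$ for every $x\in\mathbb{Q}$.

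Next I would read off the two nonemptiness conditions directly from the values of the constant maps recalled just before the lemma. From $\alpha\leq\mathbf{N}$ together with $\mathbf{N}(x,-)=\bot$ for $x\geq N$, we get $\alpha(x,-)\leq\mathbf{N}(x,-)=\bot$ for every rational $x\geq N$; since $\bot\in P$, each such $x$ lies in $P_{u}(\alpha)$, whence $P_{u}(\alpha)\neq\emptyset$. Symmetrically, from $\mathbf{M}\leq\alpha$ and $\mathbf{M}(-,x)=\bot$ for $x\leq M$, we obtain $\alpha(-,x)\leq\mathbf{M}(-,x)=\bot$ for every rational $x\leq M$, so $x\in P_{l}(\alpha)$ and $P_{l}(\alpha)\neq\emptyset$. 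Applying Corollary \ref{prime16} then closes the argument.

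The proof is essentially bookkeeping, so there is no deep obstacle; the one point that genuinely needs care is translating ``bounded'' into the correct comparisons $\mathbf{M}\leq\alpha\leq\mathbf{N}$ and tracking which direction of the order convention sends the relevant frame-map value to $\bot$. Once that landing at $\bot$ is secured, membership in the proper ideal $P$ is automatic, and the rest is a routine appeal to the already-established corollary.
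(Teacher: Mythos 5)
Your proof is correct and follows essentially the same route as the paper's: both arguments use the boundedness of $\alpha$ to show $P_{u}(\alpha)\neq\emptyset\neq P_{l}(\alpha)$ and then invoke Corollary~\ref{prime16}. The only (immaterial) difference is that you land at $\bot\in P$ directly via $\alpha(x,-)\leq\mathbf{N}(x,-)=\bot$, while the paper uses the dual inequality $\top=\mathbf{n}(-,x)\leq\alpha(-,x)$ for $x>n$ and the complementary relation between $\alpha(-,x)$ and $\alpha(x,-)$.
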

\begin{proof}
There exists $n\in \mathbb{N}$ such that  $  \mathbf{-n}\leq  \alpha\leq \mathbf{n}$, which implies that 
\[ \begin{aligned}    
&\top= \mathbf{n}(-,x)\leq \alpha(-,x) \,\, \text{ for every } \,\, x>n
\,\, \text{ and } \,\,  \\ & 
\top= \mathbf{-n}(y,-)\leq \alpha(y,-) \,\, \text{ for every } \,\, y<-n.
\end{aligned} \]
Hence, $(n,+\infty)\cap \mathbb{Q}\subseteq  P_{u}(\alpha)$ and 
 $(-\infty, -n)\cap \mathbb{Q}\subseteq  P_{l}(\alpha)$.  
It follows from Corollary \ref{prime16} that 
 $\inf  P_{u}(\alpha)= \sup P_{l} (\alpha).$  
\end{proof}
\begin{proposition} \label{prime30}
Let $P $   be a prime ideal 
 of $L$ and let $n\in\mathbb{N}$.   If $e_1, \ldots , e_n\in B(\mathcal{R}(L))$, then 
$$  
\inf  P_{u}(\mathbf{r_1}e_1+ \cdots + \mathbf{r_n}e_n)= \sup P_{l} (\mathbf{r_1}e_1+ \cdots +\mathbf{ r_n}e_n)  
$$ 
belongs to 
$$
  P_{u}(\mathbf{r_1}e_1+ \cdots + \mathbf{r_n}e_n)\cap  P_{l}(\mathbf{r_1}e_1+ \cdots + \mathbf{r_n}e_n)
$$ 
and 
\[ \begin{aligned}
\inf  P_{u}(\mathbf{r_1}e_1+ \cdots + \mathbf{r_n}e_n)=r_1\inf  P_{u}(e_1)+ \cdots + r_n\inf  P_{u}(e_n)
\end{aligned} \]
 for every $r_1, \ldots , r_n\in \mathbb{Q}$. 
\end{proposition}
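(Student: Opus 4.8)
The plan is to prove the result by induction on $n$, after first disposing of the equality $\inf P_u=\sup P_l$ by a boundedness argument. Write $\alpha_n:=\mathbf{r_1}e_1+\cdots+\mathbf{r_n}e_n$. By Remark~\ref{remark3} we have $e_i(x,-)=\bot$ for $x\ge 1$ and $e_i(-,x)=\bot$ for $x\le 0$, so each $e_i$ is bounded; hence every $\mathbf{r_i}e_i$ and their sum $\alpha_n$ lie in $\mathcal{R}^*(L)$. Therefore Lemma~\ref{prime25} already gives $\inf P_{u}(\alpha_n)=\sup P_{l}(\alpha_n)$, and it remains to identify this common value and to show that it is attained in $P_{u}(\alpha_n)\cap P_{l}(\alpha_n)$.

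Next I would reduce both remaining assertions to a single membership statement. For each $i$, since $\mathrm{coz}(e_i)$ is complemented and $P$ is prime, exactly one of $\mathrm{coz}(e_i)$, $(\mathrm{coz}(e_i))'$ lies in $P$; by Lemma~\ref{prime21} this makes $\epsilon_i:=\inf P_{u}(e_i)\in\{0,1\}$, equal to $0$ precisely when $\mathrm{coz}(e_i)\in P$. Put $c:=\sum_{i=1}^{n}r_i\epsilon_i\in\mathbb{Q}$. It then suffices to show $\alpha_n(c,-)\in P$ and $\alpha_n(-,c)\in P$, i.e. $c\in P_{u}(\alpha_n)\cap P_{l}(\alpha_n)$: indeed, by Lemma~\ref{prime15}(2) every element of $P_{l}(\alpha_n)$ is a lower bound of $P_{u}(\alpha_n)$, so $c\le\inf P_{u}(\alpha_n)$, while $c\in P_{u}(\alpha_n)$ forces $\inf P_{u}(\alpha_n)\le c$; hence $\inf P_{u}(\alpha_n)=c$, which is exactly $\sum r_i\inf P_{u}(e_i)$, and Corollary~\ref{prime17} upgrades this to $P_{u}(\alpha_n)\cap P_{l}(\alpha_n)=\{c\}$.

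The core is thus to compute $P_{u}(\alpha_n)$ explicitly, and here I would induct on $n$ using the addition formula $(\beta+\gamma)(x,-)=\bigvee_{r\in\mathbb{Q}}\beta(r,-)\wedge\gamma(x-r,-)$ that underlies Lemma~\ref{prime5}. Taking $\beta=\alpha_{n-1}$ and $\gamma=\mathbf{r_n}e_n$, the factor $\gamma(x-r,-)$ is given by Remark~\ref{remark3} and takes only the values $\top,\mathrm{coz}(e_n),\bot$, while the inductive hypothesis presents $\beta(r,-)$ as a finite join of meets of $\mathrm{coz}(e_1),\dots,\mathrm{coz}(e_{n-1})$ and their complements. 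The upshot is the closed form
\[
\alpha_n(x,-)=\bigvee_{\substack{S\subseteq\{1,\dots,n\}\\ \sum_{i\in S}r_i>x}}\Big(\bigwedge_{i\in S}\mathrm{coz}(e_i)\wedge\bigwedge_{i\notin S}(\mathrm{coz}(e_i))'\Big),
\]
and symmetrically $\alpha_n(-,x)$ is the same join subject to $\sum_{i\in S}r_i<x$. Writing $S_0:=\{i:\mathrm{coz}(e_i)\notin P\}$, primeness of $P$ shows that the atom $b_{S_0}:=\bigwedge_{i\in S_0}\mathrm{coz}(e_i)\wedge\bigwedge_{i\notin S_0}(\mathrm{coz}(e_i))'$ is the unique meet above that is not in $P$, and that $\sum_{i\in S_0}r_i=c$. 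Since the displayed join is finite, $\alpha_n(x,-)\in P$ iff no surviving $S$ equals $S_0$, i.e. iff $x\ge c$, and likewise $\alpha_n(-,x)\in P$ iff $x\le c$; in particular $\alpha_n(c,-),\alpha_n(-,c)\in P$, which closes the reduction of the second paragraph.

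I expect the genuine obstacle to be this closed-form computation, and specifically the verification that the a priori infinite join over $r\in\mathbb{Q}$ collapses to the finite join over subsets $S$ displayed above. That finiteness is precisely what makes primeness of $P$ applicable, since a prime ideal of $L$ respects only finite joins; it is also where the half-open boundary conventions of the frame of reals — already visible in the case distinctions of Lemma~\ref{prime5} and Remark~\ref{remark3} — must be tracked with care, especially at the critical value $x=c$. Once the finite form is secured, the membership bookkeeping and the order-theoretic reduction above are routine.
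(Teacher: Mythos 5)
Your strategy is viable and genuinely different from the paper's. The paper also inducts on $n$, but it never computes $(\mathbf{r_1}e_1+\cdots+\mathbf{r_n}e_n)(x,-)$ in closed form: writing $\alpha=\mathbf{r_1}e_1+\cdots+\mathbf{r_{n-1}}e_{n-1}$, it takes $x\in P_u(\alpha+\mathbf{r_n}e_n)$, uses the convolution formula and primeness to get $\mathbf{r_n}e_n(x-a,-)\in P$ for all $a<\inf P_u(\alpha)$, collapses $\bigvee_{a<\inf P_u(\alpha)}\mathbf{r_n}e_n(x-a,-)$ to $\mathbf{r_n}e_n\bigl(x-\inf P_u(\alpha),-\bigr)$ (possible because this single summand takes only three values), and so obtains $\inf P_u(\alpha)+\inf P_u(\mathbf{r_n}e_n)\le\inf P_u(\alpha+\mathbf{r_n}e_n)$; the dual estimate on $P_l$ plus Lemma \ref{prime25} closes the squeeze. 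You instead name the candidate value $c=\sum_i r_i\inf P_u(e_i)$ up front and prove the stronger statement $P_u(\alpha_n)=[c,\infty)\cap\mathbb{Q}$ and $P_l(\alpha_n)=(-\infty,c]\cap\mathbb{Q}$ via the atom decomposition over subsets $S$. Your reduction steps (boundedness, Lemma \ref{prime15}(2), Corollary \ref{prime17}, the identification $c=\sum_{i\in S_0}r_i$, and the primeness argument that $b_{S_0}$ is the unique atom outside $P$) are all correct, and your route yields more information -- an explicit formula of which Lemma \ref{prime5} is the case $n=2$ -- at the price of a heavier computation that the paper's two-sided squeeze deliberately avoids.

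The one real issue is that you leave your self-declared ``genuine obstacle,'' the closed form for $\alpha_n(x,-)$, unproved. It is true, and the induction you sketch does work: distribute the inductive closed form for $\alpha_{n-1}(a,-)$ inside $\bigvee_{a\in\mathbb{Q}}\alpha_{n-1}(a,-)\wedge\mathbf{r_n}e_n(x-a,-)$, split each atom $b_T$ on the first $n-1$ indices as $b_T\wedge\bigl(\mathrm{coz}(e_n)\vee(\mathrm{coz}(e_n))'\bigr)$, and check that the atom indexed by $S$ survives the join over $a$ exactly when a rational $a$ exists with $\sum_{i\in S\setminus\{n\}}r_i>a$ and $x-a$ in the relevant range of Remark \ref{remark3}, which by density of $\mathbb{Q}$ reduces to the strict inequality $\sum_{i\in S}r_i>x$. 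But this must actually be written out, with separate cases $r_n>0$, $r_n<0$, $r_n=0$; note in particular that for $r_n<0$ Remark \ref{remark3}(3) gives the values $\top,(\mathrm{coz}(e_n))',\bot$, not $\top,\mathrm{coz}(e_n),\bot$ as you state. As submitted, the proposal is a correct plan whose central computation is still outstanding.
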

\begin{proof} 
We prove it by induction on  $n\in\mathbb{N}$. 
For $n = 1$, the result is trivial.
Let $r_1, \ldots , r_n\in \mathbb{Q}$ be given and let $n\geq 2$. 
We put $\alpha=\mathbf{r_1}e_1+ \cdots + \mathbf{r_{n-1}}e_{n-1}$.  
Let $x\in P_{u}(\alpha+\mathbf{r_n}e_n )$ be given. Then 
\[ \begin{aligned} 
\bigvee_{a \in \mathbb{Q}}\alpha(a,-)\wedge \mathbf{r_n}e_n (x-a,-)=(\alpha+\mathbf{r_n}e_n )(x,-)\in P , 
\end{aligned} \]
which implies that $\alpha(a,-)\wedge \mathbf{r_n}e_n (x-a,-)\in P$  for every  $a \in \mathbb{Q}$, 
and so  
$\alpha(a,-) \in P$ or  $ \mathbf{r_n}e_n (x-a,-)\in P$ for every  $a \in \mathbb{Q}$. 
If $a<\inf  P_{u}(\alpha)$, then $\alpha(a,-) \not\in P$ , which implies from Remark \ref{remark3} that 
\begin{center}
$\bot= \mathbf{r_n}e_n (x-a,-)\in P$ or 
${\mathrm{coz}}(e_n)'  = \mathbf{r_n}e_n (x-a,-)\in P$. 
 \end{center} 
  From 
 \[ \begin{aligned}
 v:= \mathbf{r_n}e_n (x-\inf  P_{u}(\alpha),-)
 = \bigvee_{\substack{a \in \mathbb{Q}\\ a<\inf  P_{u}(\alpha) }}
  r_ne_n (x-a,-),
\end{aligned} \]
we conclude that 
$v=\bot \in P$
or
$v=\mathrm{coz}(e_n)'  \in P$, 
which implies that  \linebreak 
$\inf  P_{u}(\mathbf{r_n}e_n )\leq x-\inf  P_{u}(\alpha)$
and hence,  
\[ \begin{aligned}
\inf  P_{u}(\alpha)+ \inf  P_{u}(\mathbf{r_n}e_n )\leq 
 \inf  P_{u}(\alpha+\mathbf{r_n}e_n ).
\end{aligned} \]

Let  $x\in P_{l}(\alpha+\mathbf{r_n}e_n )$ be given. 
We set 
 \[ \begin{aligned}
 w:= \mathbf{r_n}e_n (-,x-\sup  P_{l}(\alpha))
 = \bigvee_{\substack{s \in \mathbb{Q}\\ s>\sup  P_{l}(\alpha) }}
  r_ne_n (-, x-s). 
\end{aligned} \]
Therefore, similar to the above, it conclude that 
 for every $s>\sup   P_{l}(\alpha)$, which we conclude from  Remark \ref{remark3} that  
$w=\bot \in P$ or $w= {\mathrm{coz}}(e_n)  \in P$.
Thus we have $x-\sup  P_{l}(\alpha)\leq  \sup  P_{l}( \mathbf{r_n}e_n)$ and hence,  
 \[ \begin{aligned}
\sup  P_{l}(\alpha+\mathbf{r_n}e_n )
\leq 
\sup  P_{l}(\alpha)+ \sup  P_{l}( \mathbf{r_n}e_n), 
\end{aligned} \]
 and finally, by Lemma  \ref{prime25},  
\[ \begin{aligned}
\inf  P_{u}(\alpha)+ \inf  P_{u}(\mathbf{r_n}e_n )=
 \inf  P_{u}(\alpha+\mathbf{r_n}e_n ).
\end{aligned} \]
The rest follows with Lemma  \ref{prime21}.  
\end{proof}
\section{${{\mathrm{Coz}}}\big(B( \mathcal{C}_c(L) )\big)$ as a base for ${{\mathrm{Coz}}}_c[L]$
} 
In \cite{EstajiTaha1},
for every 
$r\in \mathbb{R}$, it was defined that 
$u_r:=\bigvee_{n\in  \mathbb{N}}(-, a_n)$ 
and 
$l_r:=\bigvee_{n\in  \mathbb{N}}(b_n, -),$
where 
${a_n}, {b_n}\subseteq \mathbb{Q}$
are sequences such that 
$a_n, b_n\rightarrow r$
and  for every 
$n\in \mathbb{N} ,$
$$a_n < a_{n+1}< r < b_{n+1} <  b_n.$$
It was also shown that 
$\alpha(u_r)$
and
$\alpha(l_r)$
are complemented in 
$L$
and that the complement of
$\alpha(l_r)$
is
$\alpha(u_r)$. Also, see Lemma 4.1 in \cite{estaj2}.

 
The following results are counterpart of \cite[Proposition 5.1]{estaj2}.
\begin{proposition}\label{counterpart of proposition 5.1}
If 
$\alpha \in \mathcal {C}_c( L),$
then 
$\mathrm{coz}(\alpha)$
is a countable join of complemented elements of
$L$.
\end{proposition}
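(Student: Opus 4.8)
The plan is to split the cozero element along its two natural halves, $\mathrm{coz}(\alpha)=\alpha(-,0)\vee\alpha(0,-)$, and to exhibit each half as a countable join of complemented elements; joining the two countable families then finishes the proof. I will treat $\alpha(0,-)$ in detail, the element $\alpha(-,0)$ being handled symmetrically with $u_r$ in place of $l_r$. The key objects are the elements $\alpha(l_r)$ for real $r$, which (as recalled above) are complemented, with complement $\alpha(u_r)$, when $r\notin R_{\alpha}$; the hypothesis $\alpha\in\mathcal C_c(L)$ enters exactly in guaranteeing a rich countable supply of such safe values $r$.

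First I would record the frame identity $(0,-)=\bigvee_{q\in\mathbb Q,\,q>0}(q,-)$ in $\mathcal L(\mathbb R)$, which upon applying the frame homomorphism $\alpha$ gives $\alpha(0,-)=\bigvee_{q\in\mathbb Q,\,q>0}\alpha(q,-)$, a countable join. For a fixed positive rational $q$ the element $\alpha(q,-)$ need not be complemented, so the idea is to replace it by a nearby complemented element lying between $\alpha(q,-)$ and $\alpha(0,-)$. Since $R_{\alpha}$ is countable while $(0,q)$ is uncountable, I may choose a real number $r_q\in(0,q)\setminus R_{\alpha}$. Because $r_q\notin R_{\alpha}$, the element $\alpha(l_{r_q})$ is complemented, and from $0<r_q<q$ one obtains the comparisons $(q,-)\le l_{r_q}\le(0,-)$ in $\mathcal L(\mathbb R)$, whence $\alpha(q,-)\le\alpha(l_{r_q})\le\alpha(0,-)$. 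Taking joins over all positive rationals $q$ then sandwiches the middle family between two copies of $\alpha(0,-)$, yielding $\alpha(0,-)=\bigvee_{q\in\mathbb Q,\,q>0}\alpha(l_{r_q})$, a countable join of complemented elements. An entirely parallel argument, choosing $s_q\in(q,0)\setminus R_{\alpha}$ for each negative rational $q$ and using $\alpha(u_{s_q})$, writes $\alpha(-,0)$ as a countable join of complemented elements. Combining the two families expresses $\mathrm{coz}(\alpha)$ as a countable join of complemented elements.

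I expect the main obstacle to be verifying the two ingredients that make the sandwich work simultaneously: the order comparisons $(q,-)\le l_{r_q}\le(0,-)$ (which rest on the defining supremum of $l_{r_q}$ along a sequence decreasing to $r_q$, together with $0<r_q<q$) and the complementedness of $\alpha(l_{r_q})$ (which requires $r_q\notin R_{\alpha}$). It is precisely the need to keep the chosen points off the countable set $R_{\alpha}$ while still refining the original rational cover of $(0,-)$ that forces the hypothesis $\alpha\in\mathcal C_c(L)$ into play: countability of $R_{\alpha}$ forbids it from swallowing any interval $(0,q)$, so a suitable $r_q$ always exists. Once these two points are secured, the passage to joins and the final recombination are routine frame computations.
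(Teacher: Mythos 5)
Your proposal is correct and follows essentially the same route as the paper: both arguments exploit the countability of $R_\alpha$ to pick reals $r\notin R_\alpha$ arbitrarily close to $0$, use the complementedness of $\alpha(l_r)$ and $\alpha(u_r)$ for such $r$, and sandwich the rational-indexed pieces of $\mathrm{coz}(\alpha)$ between these complemented elements before taking the countable join. The only cosmetic differences are that the paper indexes by $\pm 1/n$ rather than by all nonzero rationals and merges the two halves into the single complemented elements $\alpha(u_{r_n}\vee l_{s_n})$, whereas you keep the families $\alpha(l_{r_q})$ and $\alpha(u_{s_q})$ separate.
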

\begin{proof}
For any
$n\in,\mathbb{N}$,
by  \cite[Remark 7.2 ]{EstajiTaha1},
there exist $r_n,s_n\in \mathbb{R}\smallsetminus R_\alpha$
satisfying all of the following conditions:
\begin{enumerate}
\item[{\em (1)}] 
$\frac{-1}{n} < r_n < 0 < s_n <  \frac{1}{n},$
\item[{\em (2)}] 
$u_{r_n}, u_{s_n}, l_{r_n}$ 
and
$l_{s_n}$
are complemented,
\item[{\em (3)}] 
$\alpha(\frac{1}{n} , -) \leqslant  \alpha( l_{s_n}) \leqslant \mathrm{coz}(\alpha)$
and
$\alpha(- ,\frac{-1}{n}) \leqslant  \alpha(u_{r_n}) \leqslant \mathrm{coz}(\alpha),$ 
\item[{\em (4)}] 
$\alpha(u_{r_n}\vee l_{s_n})$
is complemented.
\end{enumerate}
Therefore,
$$\mathrm{coz}(\alpha)=\bigvee _{n\in \mathbb{N}} \alpha(- ,\frac{-1}{n})\vee \alpha(\frac{1}{n} , -)\leqslant  \alpha(u_{r_n})\vee \alpha( l_{s_n}) \leqslant \mathrm{coz}(\alpha),$$
which shows that 
$\mathrm{coz}(\alpha)=\bigvee _{n\in \mathbb{N}}\alpha(u_{r_n} \vee  l_{s_n}) .$
\end{proof}
\begin{remark}\label{111111}
It is easy to see that if $a$ is complemented in $L$ and $\alpha\in \mathcal R (L)$, then \linebreak$\beta:  \mathcal L(\mathbb R) \longrightarrow  L$ given by
\[
\beta(p,q)=
\left \{
\begin{array}{lll}
\alpha(p,q)\vee a' & \hspace{7mm}\mbox{if $\,  \,\, 0\in \tau(p, q)$},\\[2mm]
\alpha(p,q)\wedge a &\hspace{7mm}\mbox{if $\,  \,\, 0\not\in \tau(p, q)$},\\[2mm]
\end{array}
\right.
\]
is a continuous real-valued function.
Since
$\beta$
is a continuous real-valued function,
and by
\cite[Proposition 4.1]{EstajiAbedi},
$e^2_a=e_a$ and
$\mathrm{coz}(e_\alpha)=a$.
It is easily seen that $e$
is an idempotent element of
$\mathcal{R} (L)$
is and only if 
$e=e_{coz(\alpha)}$.

\end{remark}
The following results are counterpart of   \cite[Theorem 5.2]{estaj2}.

\begin{proposition}\label{counterpart of theorem 5.2 }
For every
$\alpha\in \mathcal{C}_c(L) ,$
there exists a sequence
$\{\alpha_n\}_{n\in \mathbb{N}}\subseteq B( \mathcal{C}_c(L) )$ with
${\mathrm{coz}}(\alpha)=\bigvee_{n\in\mathbb{N}} {\mathrm{coz}}(\alpha_n)$.
\end{proposition}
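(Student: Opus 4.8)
The plan is to combine the two immediately preceding results. Proposition \ref{counterpart of proposition 5.1} already expresses $\mathrm{coz}(\alpha)$ as a countable join $\bigvee_{n\in\mathbb{N}} a_n$ of complemented elements $a_n := \alpha(u_{r_n}\vee l_{s_n})$ of $L$, so the only remaining task is to realize each complemented $a_n$ as the cozero of a suitable idempotent element lying inside $\mathcal{C}_c(L)$, and then to reassemble the join.

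First I would invoke Remark \ref{111111}: since each $a_n$ is complemented, there is an idempotent element $e_{a_n}\in\mathcal{R}(L)$ with $\mathrm{coz}(e_{a_n}) = a_n$. Substituting this into the representation supplied by Proposition \ref{counterpart of proposition 5.1} yields
\[
\mathrm{coz}(\alpha) = \bigvee_{n\in\mathbb{N}} a_n = \bigvee_{n\in\mathbb{N}} \mathrm{coz}(e_{a_n}),
\]
so the claim follows once each $e_{a_n}$ is shown to belong to $B(\mathcal{C}_c(L))$; setting $\alpha_n := e_{a_n}$ then finishes the argument.

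The remaining point is the membership $e_{a_n}\in\mathcal{C}_c(L)$, which I would verify by computing $R_{e_{a_n}}$. Writing $e := e_{a_n}$, an idempotent element takes only the values $0$ and $1$, so for any real $r\notin\{0,1\}$ the element $e-\mathbf{r}$ is nowhere zero and hence $\mathrm{coz}(e-\mathbf{r})=\top$; concretely, using the identity $\mathrm{coz}(e-\mathbf{r}) = e(-,r)\vee e(r,-)$ together with the value tables of Remark \ref{remark3}(1), one obtains $\top$ for every $r<0$, for every $0<r<1$ (where $(\mathrm{coz}(e))'\vee\mathrm{coz}(e)=\top$ because $\mathrm{coz}(e)$ is complemented), and for every $r>1$. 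Thus the only candidates for membership in $R_e$ are $r=0$ and $r=1$, so $R_e\subseteq\{0,1\}$ is finite and a fortiori countable. Therefore $e\in\mathcal{C}_c(L)$, and being idempotent, $e\in B(\mathcal{C}_c(L))$.

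The substitution and the case analysis for $R_e$ are routine; the one step I would regard as the crux is confirming that the idempotent produced by Remark \ref{111111} genuinely lands in the smaller ring $\mathcal{C}_c(L)$ rather than merely in $\mathcal{R}(L)$. Because an idempotent takes only the values $0$ and $1$, its set $R_e$ is automatically finite, so this potential obstacle dissolves as soon as the value tables of Remark \ref{remark3} are applied, and no countability argument beyond the triviality that a finite set is countable is needed.
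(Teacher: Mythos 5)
Your proposal is correct and follows exactly the paper's route: the paper's proof is the one-line citation of Proposition \ref{counterpart of proposition 5.1} together with Remark \ref{111111}, which is precisely your combination. The only difference is that you explicitly verify $R_{e}\subseteq\{0,1\}$ so that the idempotents land in $\mathcal{C}_c(L)$ --- a detail the paper leaves implicit (and states without proof elsewhere, e.g.\ in the proof of Proposition \ref{counterpart of proposition 4.4}) --- and this is a worthwhile addition rather than a deviation.
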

\begin{proof}
By Proposition \ref{counterpart of proposition 5.1} and  Remark \ref{111111}, it is obvious.
\end{proof}


\begin{definition}\label{c-completely}
A frame $L$
is a \textit{ $c$-completely regular frame} if and only if 
$\mathrm{Coz}_c[L]$
is a base for
$L$.
\end{definition}
The following results are counterpart of    \cite[Proposition 4.4]{Ghader(2013)}.
\begin{proposition}\label{counterpart of proposition 4.4}
Let  
$L$
be a frame. Then  
$L$ 
is a zero-dimensional frame if and only if
$L$ 
is a $c$-completely regular frame. 
\end{proposition}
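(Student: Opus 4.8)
The plan is to reduce the statement to a comparison of two bases of $L$: the collection of all complemented elements and the collection $\mathrm{Coz}_c[L]$. Recall that $L$ is zero-dimensional precisely when its complemented elements form a base, while by Definition \ref{c-completely} $L$ is $c$-completely regular precisely when $\mathrm{Coz}_c[L]$ is a base. Thus it suffices to prove that the complemented elements of $L$ form a base if and only if $\mathrm{Coz}_c[L]$ does. To link the two collections I would isolate two facts. \emph{(Embedding.)} Every complemented element $a\in L$ lies in $\mathrm{Coz}_c[L]$: by Remark \ref{111111} there is an idempotent $e_a\in B(\mathcal{R}(L))$ with $\mathrm{coz}(e_a)=a$, and since an idempotent satisfies $R_{e_a}\subseteq\{0,1\}$ it belongs to $\mathcal{C}_c(L)$, whence $a=\mathrm{coz}(e_a)\in\mathrm{Coz}_c[L]$. \emph{(Decomposition.)} Conversely, every element of $\mathrm{Coz}_c[L]$ is a countable join of complemented elements of $L$; this is exactly Proposition \ref{counterpart of proposition 5.1}, together with the fact recorded at the start of Section 3 that $\mathrm{coz}(e)$ is complemented for each idempotent $e$.

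For the forward implication, assume $L$ is zero-dimensional. Given $x\in L$, write $x=\bigvee\{c\mid c\leq x,\ c\text{ complemented}\}$. By the Embedding fact each such $c$ belongs to $\mathrm{Coz}_c[L]$ and still satisfies $c\leq x$, so $x\leq\bigvee\{\mathrm{coz}(\alpha)\mid \alpha\in\mathcal{C}_c(L),\ \mathrm{coz}(\alpha)\leq x\}\leq x$. Hence $\mathrm{Coz}_c[L]$ is a base for $L$, i.e.\ $L$ is $c$-completely regular.

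For the reverse implication, assume $\mathrm{Coz}_c[L]$ is a base. Given $x\in L$, write $x=\bigvee\{\mathrm{coz}(\alpha)\mid \alpha\in\mathcal{C}_c(L),\ \mathrm{coz}(\alpha)\leq x\}$. By the Decomposition fact each such $\mathrm{coz}(\alpha)$ equals a countable join $\bigvee_{n} c_n$ of complemented elements $c_n\leq\mathrm{coz}(\alpha)\leq x$. Substituting and reassociating the joins expresses $x$ as a join of complemented elements, each below $x$, so the complemented elements form a base and $L$ is zero-dimensional.

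The bookkeeping with joins in the two implications is routine; the only substantive input, and the step I expect to be the main obstacle, is the Decomposition fact — that the cozero of an arbitrary member of $\mathcal{C}_c(L)$ splits as a countable join of complemented elements. This is precisely what Proposition \ref{counterpart of proposition 5.1} supplies, and it in turn relies on the idempotent-approximation machinery of Section 5 (and on $B(\mathcal{R}(L))\subseteq\mathcal{C}_c(L)$, which is why the Embedding fact is available at all).
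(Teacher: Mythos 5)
Your proposal is correct and follows essentially the same route as the paper: the forward direction uses Remark \ref{111111} to realize each complemented element as $\mathrm{coz}(e_a)$ with $e_a$ an idempotent lying in $\mathcal{C}_c(L)$, and the reverse direction invokes Proposition \ref{counterpart of proposition 5.1} to split each element of $\mathrm{Coz}_c[L]$ into a countable join of complemented elements. Your explicit justification that $R_{e_a}\subseteq\{0,1\}$ (hence $e_a\in\mathcal{C}_c(L)$) fills in a detail the paper merely asserts, but the argument is the same.
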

\begin{proof}
\textit{Necessity.}
Let 
$a\in L$.
Then there exists a sequence 
$\{a_\lambda\}_{\lambda\in\Lambda}$
of complemented elements of
$L$
such that 
$a=\bigvee_{\lambda\in\Lambda}a_\lambda.$
Also, we know
${\mathrm{coz}}(e_{a_\lambda})=a_\lambda$,
which implies that
 $a=\bigvee_{\lambda\in\Lambda} {\mathrm{coz}}(e_{a_\lambda}).$
Since for every
$\lambda\in\Lambda, e_{a_\lambda}$
  is an idempotent element  and all the idempotent elements belong to
$\mathcal{C}_c(L)$,
the proof is complete.

 \medskip

\textit{Sufficiency.}
By Definition \ref{c-completely},
${\mathrm{Coz}}_c[L]$
is a base for
$L$,
and by
Proposition \ref{counterpart of proposition 5.1},
for every
$\alpha \in \mathcal {C}_c(L),$
${\mathrm{coz}}(\alpha)$
is a countable join of complemented elements of
$L$.
Then
$L$ 
is a zero-dimensional frame.
\end{proof}

\section{${\mathrm{Coz}}_c[L]$ as a $\sigma$-frame 
} 
In this section, the main purpose of writing this article  is going to be proved, that is, ${{\mathrm{Coz}}}_c[L]$ is a $\sigma$-frame for every completely regular frame $L$. To achieve this purpose, we need some lemmas and propositions, which we will express and prove them at the beginning.
\begin{lemma}  \label{prime35}
Let $\alpha$ be an element of $ \mathcal{R}(L)$ and let $x\in \mathbb{R}$. Then the following statements are  true:
\begin{enumerate} 
\item[{\rm (1)}] 
 If $x\in R_{\alpha}$, then  there exists a  prime ideal $P$ 
 in $L$ such that $x=\inf P_{u}(\alpha)$. 
 \item[{\rm (2)}]   	If there exists a countably $\vee$-complete   prime ideal $P$ 
 in $L$ such that \linebreak 
 $x=\inf P_{u}(\alpha)$, then 
 $x\in R_{\alpha}$. 
 \item[{\rm (3)}]   
Let $P $   be a prime ideal  of $L$ and let $n\in\mathbb{N}$ be given.   
 Suppose that $e_1, \ldots , e_n\in B(\mathcal{R}(L))$. 
 If $r_1, \ldots , r_n\in \mathbb{Q}$ and  $x=\inf P_{u}(\mathbf{r_1}e_1+ \cdots + \mathbf{r_n}e_n)$, then   \linebreak 
 $x\in R_{\mathbf{r_1}e_1+ \cdots + \mathbf{r_n}e_n}$. 
\end{enumerate}
\end{lemma}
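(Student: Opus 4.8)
Lemma 5.1 (prime35) — proof plan.

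Let me look at what needs proving. There are three statements relating the set $R_\alpha = \{r \in \mathbb{R} \mid \mathrm{coz}(\alpha - r) \neq \top\}$ to the infima $\inf P_u(\alpha)$ over prime ideals.

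For (1): $x \in R_\alpha$ means $\mathrm{coz}(\alpha - x) \neq \top$, i.e. $\mathrm{coz}(\alpha - x)$ is not the top element. I want to produce a prime ideal $P$ with $x = \inf P_u(\alpha)$. The natural move: since $\mathrm{coz}(\alpha - x) \neq \top$, there's a prime ideal $P$ containing $\mathrm{coz}(\alpha - x)$ (an element below top extends to a prime). Then I need to relate $P_u(\alpha)$ to $x$. The key computation: for $\mathbf{x}$ the constant, $\alpha - x$ has $\mathrm{coz}(\alpha-x) = (\alpha-x)(-,0) \vee (\alpha-x)(0,-)$. With both these $\leq \mathrm{coz}(\alpha-x) \in P$, I should be able to show $P_u(\alpha) \cap (-\infty, x) = \emptyset$ and that $x$ is a lower bound, forcing $\inf P_u(\alpha) = x$. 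Essentially, membership of the cozero in $P$ pins the Dedekind cut (via Corollary 4.3/4.4, prime16/prime17) exactly at $x$.

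For (2): this is the converse under a completeness hypothesis. Given a countably $\vee$-complete prime $P$ with $x = \inf P_u(\alpha)$, I want $\mathrm{coz}(\alpha - x) \neq \top$, equivalently $\mathrm{coz}(\alpha - x) \in$ some proper context — the plan is to show $\mathrm{coz}(\alpha - x) \in P$ directly. Write $\mathrm{coz}(\alpha-x) = (\alpha-x)(-,0) \vee (\alpha-x)(0,-)$ and express each piece as a countable join of elements $\alpha(-, x - 1/n)$ resp. $\alpha(x + 1/n, -)$. Since $x = \inf P_u(\alpha) = \sup P_l(\alpha)$, for each $n$ the value $x + 1/n \in P_u(\alpha)$ and $x - 1/n \in P_l(\alpha)$, so each term lies in $P$; countable $\vee$-completeness of $P$ then gives the join in $P$, hence $\mathrm{coz}(\alpha - x) \in P \neq L$, so it is not $\top$. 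This is where the completeness hypothesis is genuinely used, and it is the main obstacle: I must carefully justify that the two cozero pieces are exactly countable joins of the relevant scale elements and that the cut value $x$ makes each cofinal approximant land in $P_u$ or $P_l$.

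For (3): this is the payoff combining Proposition 4.9 (prime30) with part (2). By Proposition \ref{prime30}, $x = \inf P_u(\mathbf{r_1}e_1 + \cdots + \mathbf{r_n}e_n)$ actually lies in $P_u \cap P_l$, i.e. $x \in \mathbb{Q}$ and the infimum is attained, so $(\alpha - x)$ where $\alpha = \mathbf{r_1}e_1 + \cdots + \mathbf{r_n}e_n$ is again a $\mathbb{Q}$-combination of idempotents. The plan is to show directly that $\mathrm{coz}(\alpha - x) \neq \top$ without invoking countable completeness: because $x$ is attained in $P_u(\alpha) \cap P_l(\alpha)$, both $\alpha(x,-) \in P$ and $\alpha(-,x) \in P$, and since $\alpha - \mathbf{x}$ is a finite combination of idempotents its cozero is a finite lattice expression in the $\mathrm{coz}(e_i)$ and their complements (via Lemmas \ref{prime5}, \ref{prime80}, \ref{prime60}); I expect to read off that $\mathrm{coz}(\alpha - x) \in P$ from the finite case analysis, giving $\mathrm{coz}(\alpha-x) \neq \top$ and hence $x \in R_{\alpha}$. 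The finiteness is what lets me bypass the completeness assumption needed in (2) — that is the conceptual point of stating (3) separately, and the only real work is the bookkeeping of which cozero/complement combination lands in the prime ideal.
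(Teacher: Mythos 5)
Your plan coincides with the paper's proof in all three parts: for (1) a prime ideal containing $\mathrm{coz}(\alpha-\mathbf{x})$ forces $(-\infty,x)\cap\mathbb{Q}\subseteq P_{l}(\alpha)$ and $(x,\infty)\cap\mathbb{Q}\subseteq P_{u}(\alpha)$, pinning the cut at $x$ via Corollary \ref{prime16}; for (2) countable $\vee$-completeness puts the countable join $\mathrm{coz}(\alpha-\mathbf{x})$ into $P$; for (3) Proposition \ref{prime30} gives $x\in P_{u}\cap P_{l}$, so $\mathrm{coz}(\alpha-\mathbf{x})=\alpha(-,x)\vee\alpha(x,-)$ is a finite join of two elements of $P$ and no completeness is needed. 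Your extra detour in (3) through the lattice formulas of Lemmas \ref{prime5}--\ref{prime60} is unnecessary once $\alpha(x,-),\alpha(-,x)\in P$ are in hand, but otherwise this is exactly the paper's argument.
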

\begin{proof} 
 (1).   
By the hypothesis we have  ${{\mathrm{coz}}}(\alpha-\mathbf{x})\not =\top$. Then  there exists  a  prime ideal $P$ 
 in $L$  such that 
 $ {{\mathrm{coz}}}(\alpha-\mathbf{x})\in P$, which implies that  
 $$\bigvee_{\substack{r\in\mathbb{Q}\\r<x }} \alpha(-,r) \vee 
  \bigvee_{\substack{r\in\mathbb{Q}\\r>x }} \alpha( r,-) \in P. 
  $$
  Therefore, 
 \[ \begin{aligned} 
\forall   r\in\mathbb{Q}\,\,\,\big( r<x \Rightarrow  \alpha(-,r)\in P\Rightarrow r\in P_{l}(\alpha) \big)  \Rightarrow x\leq \sup P_{l}(\alpha),
  \end{aligned} \]
 and 
  \[ \begin{aligned} 
\forall  r\in\mathbb{Q}\,\,\,\big( r>x 
  \Rightarrow  \alpha(-,r)\in P
  \Rightarrow r\in P_{u}(\alpha) \big)   \Rightarrow x\geq \inf P_{u}(\alpha).
 \end{aligned} \]
 Since, by Corollary \ref{prime16}, $\sup P_{l}(\alpha)=\inf P_{u}(\alpha)$,  
  we conclude that $x= \inf P_{u}(\alpha).$ 
\medskip  

(2). 
 Let $P$ be a  prime ideal of $L$  such that $x= \inf P_{u}(\alpha).$ 
 Then, by Lemma \ref{prime15},  
 \[ \begin{aligned} 
\forall  r\in\mathbb{Q}\,\,\,\big( r<x 
 \Rightarrow  r\in  P_{l}(\alpha)
 \Rightarrow  \alpha(-,r)\in P \big)  
  \end{aligned} \]
 and 
  \[ \begin{aligned} 
 \forall  r\in\mathbb{Q}\,\,\,\big( r>x 
 \Rightarrow  r\in  P_{u}(\alpha)
  \Rightarrow  \alpha(r,-)\in P \big)  .
 \end{aligned} \]
By the hypothesis we have  
  \[ \begin{aligned} 
 {{\mathrm{coz}}}(\alpha-\mathbf{x})=
 \bigvee_{\substack{r\in\mathbb{Q}\\r<x }} \alpha(-,r) \vee 
  \bigvee_{\substack{r\in\mathbb{Q}\\r>x }} \alpha( r,-) \in P, 
 \end{aligned} \] 
 and so  $x\in R_{\alpha}$.
 
 (3). By Lemma \ref{prime35}, 
 $$
 x\in P_{u}(\mathbf{r_1}e_1+ \cdots + \mathbf{r_n}e_n)\cap  P_{l}(\mathbf{r_1}e_1+ \cdots + \mathbf{r_n}e_n).
 $$
Then 
 $$
 {{\mathrm{coz}}}(\mathbf{r_1}e_1+ \cdots + \mathbf{r_n}e_n-x)=(\mathbf{r_1}e_1+ \cdots + \mathbf{r_n}e_n)(-,x)\vee (\mathbf{r_1}e_1+ \cdots + \mathbf{r_n}e_n)(x,-)\in P,
 $$ 
 which implies that 
 ${{\mathrm{coz}}}(\mathbf{r_1}e_1+ \cdots + \mathbf{r_n}e_n-\mathbf{x})\not =\top. $    
  Therefore,  $x\in R_{\mathbf{r_1}e_1+ \cdots + \mathbf{r_n}e_n}$. 
\end{proof}
For every $A,B\subseteq \mathbb R$, we put
$A\diamond B:=\big\{a\diamond b \mid    a\in A \,\, and\,\, b\in B\big\}$ 
for every $\diamond\in \{+,\cdot, \wedge, \vee\}$. 
\begin{proposition} \label{prime40}
Let $n$ be an element of   $ \mathbb{N}$.   
 Suppose that $e_1, \ldots , e_n\in B(\mathcal{R}(L))$. 
 If
 \linebreak $r_1, \ldots , r_n\in \mathbb{Q}$, then   
 $$  
 R_{\mathbf{r_1}e_1+ \cdots + \mathbf{r_n}e_n}\subseteq 
  R_{\mathbf{r_1}e_1}+ \cdots + R_{\mathbf{ r_n}e_n}.
 $$   
\end{proposition}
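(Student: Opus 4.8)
The plan is to fix an arbitrary $x\in R_{\mathbf{r_1}e_1+\cdots+\mathbf{r_n}e_n}$ and to produce a decomposition $x=x_1+\cdots+x_n$ in which each summand $x_i$ lies in $R_{\mathbf{r_i}e_i}$; membership in the Minkowski sum on the right then follows at once from the definition of $A+B$. The first step uses Lemma~\ref{prime35}(1): since $x\in R_\alpha$ for $\alpha:=\mathbf{r_1}e_1+\cdots+\mathbf{r_n}e_n$, there is a prime ideal $P$ of $L$ with $x=\inf P_{u}(\alpha)$. This reduces the problem to the analysis of a single prime ideal, which is exactly the setting in which the arithmetic of the cuts $\inf P_{u}(-)$ has already been established.

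Next I would apply Proposition~\ref{prime30} to this same $P$, which gives the additive splitting
\[
x=\inf P_{u}(\mathbf{r_1}e_1+\cdots+\mathbf{r_n}e_n)=r_1\inf P_{u}(e_1)+\cdots+r_n\inf P_{u}(e_n).
\]
Combining this with Lemma~\ref{prime21}(2), namely $\inf P_{u}(\mathbf{r_i}e_i)=r_i\inf P_{u}(e_i)$ for each $i$, rewrites the right-hand side termwise as
\[
x=\inf P_{u}(\mathbf{r_1}e_1)+\cdots+\inf P_{u}(\mathbf{r_n}e_n).
\]
I then set $x_i:=\inf P_{u}(\mathbf{r_i}e_i)$, so that by construction $x=x_1+\cdots+x_n$.

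It remains to verify that $x_i\in R_{\mathbf{r_i}e_i}$ for every $i$, and this is precisely the single-term case ($n=1$) of Lemma~\ref{prime35}(3) applied to the one idempotent $e_i$ with the rational $r_i$: since $x_i=\inf P_{u}(\mathbf{r_i}e_i)$ for the prime ideal $P$, that lemma delivers $x_i\in R_{\mathbf{r_i}e_i}$. Hence $x=x_1+\cdots+x_n$ with each $x_i\in R_{\mathbf{r_i}e_i}$, which is exactly the assertion $x\in R_{\mathbf{r_1}e_1}+\cdots+R_{\mathbf{r_n}e_n}$. I do not anticipate a serious obstacle, since all of the structural content is already packaged in Propositions~\ref{prime20} and~\ref{prime30} together with Lemma~\ref{prime21}; the only point demanding care is to invoke Lemma~\ref{prime35}(1), Proposition~\ref{prime30}, and Lemma~\ref{prime35}(3) for one and the same prime ideal $P$, so that a single cut $\inf P_{u}$ simultaneously controls the full sum and each individual term.
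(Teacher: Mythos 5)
Your proposal is correct and follows essentially the same route as the paper's own proof: reduce to a prime ideal via Lemma~\ref{prime35}(1), split the cut additively via Proposition~\ref{prime30}, and conclude $x_i\in R_{\mathbf{r_i}e_i}$ via Lemma~\ref{prime35}(3). Your explicit intermediate appeal to Lemma~\ref{prime21}(2) to identify $r_i\inf P_u(e_i)$ with $\inf P_u(\mathbf{r_i}e_i)$ is a detail the paper leaves implicit, but the argument is the same.
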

\begin{proof}
Suppose that $x\in R_{\mathbf{r_1}e_1+ \cdots + \mathbf{r_n}e_n}$. 
Thus, by Lemma \ref{prime35}(1), 
  there exists a  prime ideal $P$ 
 in $L$ such that $x=\inf P_{u}(\mathbf{r_1}e_1+ \cdots + \mathbf{r_n}e_n)$. 
 Hence,  by Proposition \ref{prime30}, 
$x=x_1+\cdots +x_n$, where $x_i=\inf P_{u}(\mathbf{r_i}e_i)$ 
for every $1\leq i\leq n$.  
Then, by Lemma \ref{prime35}(3), $x_i\in R_{\mathbf{r_i}e_i}$. 
Therefore, 
$$  
 R_{\mathbf{r_1}e_1+ \cdots +\mathbf{ r_n}e_n}\subseteq 
  R_{\mathbf{r_1}e_1}+ \cdots + R_{ \mathbf{r_n}e_n}.
 $$   
\end{proof}
\begin{proposition} \label{prime95}
Suppose that 
$e_1, e_2 \in B(\mathcal{R}(L)).$ 
Then, 
$R_{e_1\diamond e_2}\subseteq  R_{e_1}\diamond R_{e_2}$
for every \linebreak$\diamond\in\{\cdot, \vee, \wedge\}.$
\end{proposition}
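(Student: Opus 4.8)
The plan is to prove the inclusion $R_{e_1\diamond e_2}\subseteq R_{e_1}\diamond R_{e_2}$ for $\diamond\in\{\cdot,\vee,\wedge\}$ by mimicking the structure of the additive case already handled in Proposition \ref{prime40}, but now invoking the multiplicative/lattice version of the cut formula, namely Proposition \ref{prime20}, in place of Proposition \ref{prime30}. The key point is that Proposition \ref{prime20} establishes, for each prime ideal $P$ and each $\diamond\in\{+,\vee,\wedge,\cdot\}$, the single identity
\[
\inf P_u(e_1\diamond e_2)=\inf P_u(e_1)\,\diamond\,\inf P_u(e_2),
\]
which is exactly the factorization we need to split a point of $R_{e_1\diamond e_2}$ into a $\diamond$-combination of points coming from $R_{e_1}$ and $R_{e_2}$.

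First I would take an arbitrary $x\in R_{e_1\diamond e_2}$. By Lemma \ref{prime35}(1), since $x\in R_{e_1\diamond e_2}$ there exists a prime ideal $P$ in $L$ with $x=\inf P_u(e_1\diamond e_2)$. Next I would apply Proposition \ref{prime20} to this same $P$ to obtain $x=x_1\diamond x_2$, where $x_1:=\inf P_u(e_1)$ and $x_2:=\inf P_u(e_2)$. The final step is to return each $x_i$ to the corresponding set $R_{e_i}$: by Lemma \ref{prime21}(1) each $x_i=\inf P_u(e_i)$ lies in $P_u(e_i)\cap P_l(e_i)$ and equals either $0$ or $1$, so ${\mathrm{coz}}(e_i-\mathbf{x_i})=e_i(-,x_i)\vee e_i(x_i,-)\in P\neq\top$, whence $x_i\in R_{e_i}$. (Alternatively one can cite Corollary \ref{prime90}, which records precisely that $\inf P_u(e_1\diamond e_2)\in R_{e_1\diamond e_2}$, together with the same reasoning applied to each factor.) Combining these gives $x=x_1\diamond x_2\in R_{e_1}\diamond R_{e_2}$, and since $x$ was arbitrary the desired inclusion follows.

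The arrangement to watch is that, unlike Proposition \ref{prime40}, here the operations $\cdot,\vee,\wedge$ are not linear, so one cannot reuse the inductive splitting of Proposition \ref{prime30}; the entire weight of the argument rests on Proposition \ref{prime20} being valid for an \emph{arbitrary} prime ideal $P$ (not merely a countably $\bigvee$-complete one), which is why the restriction to idempotents $e_1,e_2\in B(\mathcal{R}(L))$ is essential. Because the statement ranges over three operations simultaneously, I would either treat all three uniformly by quoting Proposition \ref{prime20} once for a generic $\diamond$, or remark that the three cases are formally identical after substituting the relevant value of $\diamond$. I expect the only genuine subtlety to be the bookkeeping in the last step, confirming that $x_i\in R_{e_i}$; this is the same verification as in Lemma \ref{prime35}, and for $n=2$ with $r_1=r_2=1$ it is routine, so no real obstacle arises beyond citing the earlier results in the correct order.
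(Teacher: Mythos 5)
Your argument is correct and is exactly the route the paper takes: its proof of this proposition is the one-line citation of Proposition \ref{prime20}, Corollary \ref{prime90}, and Lemma \ref{prime35}, and your write-up simply spells out how those pieces combine (Lemma \ref{prime35}(1) to produce the prime ideal $P$, Proposition \ref{prime20} to factor $\inf P_u(e_1\diamond e_2)$, and the $P_u\cap P_l$ membership to place each factor in $R_{e_i}$). No issues.
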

\begin{proof}
By Proposition
\ref{prime20},
Corollary
\ref{prime90},
and Lemma
\ref{prime35},
this proposition will be easily proved.
\end{proof}
\begin{remark} \label{prime45} 
If $\alpha$  and $\beta$ are  elements of $ \mathcal{R}(L)$ with 
 $\alpha\leq\beta$, then 
 $P_{u}(\beta)\subseteq P_{u}(\alpha)$ and\linebreak 
 $P_{l}(\alpha)\subseteq P_{l}(\beta)$.
\end{remark}

Let $(c_n)_{n=0}^{\infty}\in \mathrm{Coz}[L]$ and 
$(\alpha_n)_{n=0}^{\infty}\in \mathrm{Coz}[L]$ with 
$ \mathbf{0}\leq \alpha_n \leq \mathbf{1}$ and $\mathrm{coz}(\alpha_n)=~c_n$ for every $n$, be given. 
Banaschewski \cite{Banaschewski2009}
showed that $\alpha:=\sum_{i=0}^{\infty} \frac{\alpha_i}{\mathbf{2}^i}\in\mathcal{R}(L)$, \linebreak 
 $\mathrm{coz}(\alpha)=\bigvee_{i=0}^{\infty} c_n$ 
  and   $\alpha\leq \sum_{i=0}^{n} \frac{\alpha_i}{\mathbf{2}^i} + 
\frac{\mathbf{1}}{ \mathbf{2}^n}$  for every $n$.

Given any countable family $(e_n)_{n=0}^{\infty}$ in 
$B\big(\mathcal{R}(L)\big)$, consider 
$\alpha=\sum_{n=0}^{\infty} \frac{e_n}{\mathbf{2}^n}$.
 Then for every $n\in \mathbb{N}$,  we have
\[ \begin{aligned}   
\alpha\leq \sum_{i=0}^{n} \frac{e_i}{\mathbf{2}^i} + 
\frac{\mathbf{1}}{ \mathbf{2}^n} 
\Rightarrow 
 P_{u}(\sum_{i=0}^{n} \frac{e_i}{\mathbf{2}^i} + \frac{\mathbf{1}}{ \mathbf{2}^n}  )\subseteq P_{u}(\alpha)  
\Rightarrow 
\inf  P_{u}(\alpha)\leq \sum_{i=0}^{n} \inf P_{u}(\frac{e_i}{\mathbf{2}^i})+ \frac{\mathbf{1}}{ \mathbf{2}^{n}} ,
\end{aligned} \]
which implies that 
$ \inf  P_{u}(\alpha)\leq \sum_{i=0}^{\infty} \inf P_{u}(\frac{e_i}{\mathbf{2}^i})$. 
On the other hand,  for every $n\in \mathbb{N}$,  it follows that
\[ \begin{aligned}   
\sum_{i=0}^{n} \frac{e_i}{\mathbf{2}^i}  \leq \alpha 
\Rightarrow
  P_{u}(\alpha)  \subseteq  
 P_{u}(\sum_{i=0}^{n} \frac{e_i}{\mathbf{2}^i})
\Rightarrow 
\sum_{i=0}^{n} \inf P_{u}(\frac{e_i}{\mathbf{2}^i}) 
 \leq \inf  P_{u}(\alpha) ,
\end{aligned} \]
which implies that 
$ \sum_{i=0}^{\infty} \inf P_{u}(\frac{e_i}{\mathbf{2}^i})  \leq \inf  P_{u}(\alpha) $. 
 Hence,  $ \inf  P_{u}(\alpha)=\sum_{i=0}^{\infty} \inf P_{u}(\frac{e_i}{\mathbf{2}^i})$.
\begin{proposition} \label{prime50}  
Let $(e_n)_{n=0}^{\infty}$  be a   countable family  of    $B\big(\mathcal{R}(L)\big)$. 
 If $\alpha=\sum_{n=0}^{\infty}  \frac{e_n} {\mathbf{2}^n}$, 
 then   
$  
 R_{\alpha}\subseteq 
 \sum_{n=0}^{\infty} R_{ \frac{e_n}{\mathbf{2}^n}}.
$
\end{proposition}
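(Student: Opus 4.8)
The plan is to fix an arbitrary $x\in R_\alpha$ and realize it as a convergent series $\sum_{n=0}^{\infty}x_n$ whose $n$-th term lies in $R_{\frac{e_n}{\mathbf{2}^n}}$, so that $x$ belongs to $\sum_{n=0}^{\infty}R_{\frac{e_n}{\mathbf{2}^n}}$ by the definition of the (infinite) sum of subsets of $\mathbb R$. First I would apply Lemma \ref{prime35}(1) to the element $\alpha\in\mathcal R(L)$: since $x\in R_\alpha$, there is a prime ideal $P$ of $L$ with $x=\inf P_{u}(\alpha)$. This reduces everything to analyzing $\inf P_{u}(\alpha)$ for this single prime ideal $P$.

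The decisive ingredient is the identity established in the computation immediately preceding the statement, namely $\inf P_{u}(\alpha)=\sum_{i=0}^{\infty}\inf P_{u}\!\left(\frac{e_i}{\mathbf{2}^i}\right)$, which holds for every prime ideal $P$. Accordingly I would set $x_i:=\inf P_{u}\!\left(\frac{e_i}{\mathbf{2}^i}\right)$, so that $x=\sum_{i=0}^{\infty}x_i$. To see that this series is well defined I would note, via Lemma \ref{prime21}(1), that $\inf P_{u}(e_i)\in\{0,1\}$ and hence $x_i\in\{0,\frac{1}{2^i}\}$ by Lemma \ref{prime21}(2); thus the series is dominated by $\sum_{i}2^{-i}$ and converges.

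It remains to show each term belongs to the right set. Writing $\frac{e_i}{\mathbf{2}^i}=\mathbf{r_i}e_i$ with $r_i=\frac{1}{2^i}\in\mathbb Q$, I would apply Lemma \ref{prime35}(3) in the case $n=1$: since $x_i=\inf P_{u}(\mathbf{r_i}e_i)$ for the prime ideal $P$, that lemma yields $x_i\in R_{\mathbf{r_i}e_i}=R_{\frac{e_i}{\mathbf{2}^i}}$. Combining the three steps gives $x=\sum_{i=0}^{\infty}x_i\in\sum_{i=0}^{\infty}R_{\frac{e_i}{\mathbf{2}^i}}$, which is the desired inclusion.

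I expect the only genuine obstacle to be the justification of the term-by-term identity $\inf P_{u}(\alpha)=\sum_{i}\inf P_{u}(\frac{e_i}{\mathbf{2}^i})$; however this is exactly the two-sided estimate carried out just before the statement, using the monotonicity $P_{u}(\beta)\subseteq P_{u}(\gamma)$ for $\gamma\le\beta$ (Remark \ref{prime45}) applied to the partial sums together with the bound $\alpha\le\sum_{i=0}^{n}\frac{e_i}{\mathbf{2}^i}+\frac{\mathbf{1}}{\mathbf{2}^n}$. Once that identity is in hand, the proof is a direct assembly of Lemma \ref{prime35} and Lemma \ref{prime21}, with no further computation required.
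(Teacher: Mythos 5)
Your proposal is correct and follows essentially the same route as the paper: Lemma \ref{prime35}(1) to produce the prime ideal $P$ with $x=\inf P_{u}(\alpha)$, the term-by-term identity $\inf P_{u}(\alpha)=\sum_{i=0}^{\infty}\inf P_{u}\!\left(\frac{e_i}{\mathbf{2}^i}\right)$ established just before the statement (via Remark \ref{prime45}), and Lemma \ref{prime35}(3) to place each $x_i$ in $R_{\frac{e_i}{\mathbf{2}^i}}$. The only addition is your explicit convergence check via Lemma \ref{prime21}, which the paper leaves implicit.
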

\begin{proof}
Suppose that $x\in R_{\alpha}$. 
Thus, by Lemma \ref{prime35}(1), 
  there exists a  prime ideal $P$ 
 in $L$ such that $x=\inf P_{u}(\alpha)$. 
 Hence,  it follows from  Remark \ref{prime45} that 
$x= \sum_{n=0}^{\infty} x_i$, where\linebreak $x_i=\inf P_{u}(\frac{e_i}{\mathbf{2}^i})$ 
for every $ i\in \mathbb{N}\cup\{0\}$.  
Then, by Lemma \ref{prime35}(3), we have $x_i\in R_{\frac{e_i}{\mathbf{2}^i}}$. 
Therefore, 
$ 
 R_{\alpha}\subseteq 
 \sum_{n=0}^{\infty} R_{ \frac{e_n}{\mathbf{2}^n}}.
$  
\end{proof}
\begin{theorem} \label{prime55}  
 For every completely regular frame $L$, 
   ${{\mathrm{Coz}}}_c[L]$ is a $\sigma$-frame. 
\end{theorem}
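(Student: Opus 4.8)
The plan is to identify $\mathrm{Coz}_c[L]$ with the family $T$ of all countable joins of complemented elements of $L$, and then observe that $T$ is visibly a $\sigma$-frame. First I would record that $\mathrm{Coz}_c[L]\subseteq\mathrm{Coz}[L]$, so every meet and join appearing below is computed in $L$, and the $\sigma$-distributive law $a\wedge\bigvee S=\bigvee_{s\in S}(a\wedge s)$ for countable $S$ is inherited from the frame $L$ at no cost. Thus the theorem reduces to two closure facts together with the characterization $\mathrm{Coz}_c[L]=T$: once that identity is known, $\mathrm{Coz}_c[L]$ is automatically a bounded distributive lattice containing $\bot$ and $\top$, closed under finite meets and countable joins.

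The core is to prove $\mathrm{Coz}_c[L]=T$. The inclusion $\mathrm{Coz}_c[L]\subseteq T$ is exactly Proposition \ref{counterpart of proposition 5.1}. For $T\subseteq\mathrm{Coz}_c[L]$, I take complemented elements $a_0,a_1,\dots$ and set $c:=\bigvee_n a_n$. The decisive move is to \emph{disjointify}: put $b_n:=a_n\wedge a_0'\wedge\cdots\wedge a_{n-1}'$, so each $b_n$ is complemented (complemented elements form a Boolean sublattice), the family $(b_n)$ is pairwise disjoint, and $\bigvee_n b_n=\bigvee_n a_n=c$. Using the idempotents $e_{b_n}$ of Remark \ref{111111}, with $\mathrm{coz}(e_{b_n})=b_n$ and $\mathbf 0\le e_{b_n}\le\mathbf 1$, I form $\gamma:=\sum_{n}\frac{e_{b_n}}{\mathbf 2^{n}}$; by the Banaschewski computation preceding Proposition \ref{prime50} we get $\gamma\in\mathcal R(L)$ and $\mathrm{coz}(\gamma)=\bigvee_n b_n=c$.

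The step I expect to be the main obstacle is verifying $\gamma\in\mathcal C_c(L)$, that is, that $R_\gamma$ is countable; here disjointness does the essential work. By Lemma \ref{prime35}(1) every element of $R_\gamma$ has the form $\inf P_u(\gamma)$ for some prime ideal $P$, and the computation displayed before Proposition \ref{prime50} gives $\inf P_u(\gamma)=\sum_n\inf P_u\!\big(\frac{e_{b_n}}{\mathbf 2^n}\big)$, where by Lemma \ref{prime21} each summand equals $\frac{1}{2^n}$ when $b_n\notin P$ and $0$ otherwise. Since the $b_n$ are disjoint, at most one index $n$ can satisfy $b_n\notin P$ (if $b_n,b_m\notin P$ with $n\neq m$, then $b_n\wedge b_m=\bot\in P$ forces $b_n\in P$ or $b_m\in P$). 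Hence $\inf P_u(\gamma)\in\{0\}\cup\{\frac{1}{2^n}:n\in\mathbb N\}$, a fixed countable set, so $R_\gamma$ is countable and $c=\mathrm{coz}(\gamma)\in\mathrm{Coz}_c[L]$. It is worth stressing why the obstacle is real: the unrefined bound $R_\gamma\subseteq\sum_n R_{\frac{e_{b_n}}{\mathbf 2^n}}$ coming straight from Proposition \ref{prime50} is a Cantor-type set of subsums and is uncountable, so disjointification is precisely what collapses it to a countable set.

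Finally I would conclude. With $\mathrm{Coz}_c[L]=T$ established, closure under countable joins is immediate, since a countable join of countable joins of complemented elements is again such; and closure under finite meets follows from frame distributivity in $L$, because $\big(\bigvee_n a_n\big)\wedge\big(\bigvee_m b_m\big)=\bigvee_{n,m}(a_n\wedge b_m)$ is a countable join of complemented elements (meets of complemented elements being complemented). Combined with the inherited bounded-distributive-lattice structure and $\sigma$-distributivity, this shows $\mathrm{Coz}_c[L]$ is a $\sigma$-frame. Complete regularity enters only as the ambient hypothesis placing us inside the cozero framework; the argument itself rests on the single structural fact that complemented elements of $L$ are cozeros of idempotents lying in $\mathcal C_c(L)$.
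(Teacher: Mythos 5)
Your proposal is correct, and it follows the same overall skeleton as the paper's proof: reduce via Proposition \ref{counterpart of proposition 5.1}/\ref{counterpart of theorem 5.2 } to showing that a countable join $\bigvee_n\mathrm{coz}(e_n)$ of cozeros of idempotents lies in $\mathrm{Coz}_c[L]$, form the Banaschewski sum $\sum_n e_n/\mathbf{2}^n$, and control its $R$-set through Lemma \ref{prime35} and the $\inf P_u$ calculus. The genuine difference is your disjointification step, and it is not cosmetic. The paper's proof concludes by asserting that $\sum_{n=0}^{\infty}R_{e_n/\mathbf{2}^n}$ is countable and invoking Proposition \ref{prime50}; but that set consists of all convergent subsums $\sum_n x_n$ with $x_n\in R_{e_n/\mathbf{2}^n}$, and when each $R_{e_n/\mathbf{2}^n}=\{0,1/2^n\}$ (the generic case) it is the full interval $[0,2]$, hence uncountable --- exactly the obstacle you flag. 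By first replacing the complemented elements $a_n=\mathrm{coz}(e_n)$ with the pairwise disjoint $b_n=a_n\wedge a_0'\wedge\cdots\wedge a_{n-1}'$ (still complemented, same join), you force at most one $b_n$ to lie outside any given prime ideal $P$, so $\inf P_u(\gamma)$ lands in the fixed countable set $\{0\}\cup\{1/2^n: n\in\mathbb N\}$ and Lemma \ref{prime35}(1) gives $R_\gamma$ countable directly. In short, your argument buys a justification that the paper's version lacks: it repairs the one step where the published proof's stated reason does not actually suffice, while the rest of your reduction (closure of countable joins of complemented elements under countable joins and finite meets, with distributivity inherited from $L$) matches the intended structure of the paper.
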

\begin{proof}
By Proposition \ref{counterpart of theorem 5.2 }, it suffices to show that 
if $(e_n)_{n=0}^{\infty}$  is a   countable family  of    $B\big(\mathcal{R}(L)\big)$, then 
$\bigvee_{n=0}^{\infty} {{\mathrm{coz}}}(e_n)\in {{\mathrm{Coz}}}_c[L]$. 
We put $\alpha=\sum_{n=0}^{\infty} \frac{e_n}{\mathbb{{\mathbf{2}}}^n}$. 
It is well known that  
$ {{\mathrm{coz}}}(\alpha)=\bigvee_{n=0}^{\infty} {{\mathrm{coz}}}(e_n)$. 
Since 
$ \sum_{n=0}^{\infty} R_{ \frac{e_n}{\mathbf{2}^n}}$ is a  countable set, we conclude from Proposition \ref{prime50}  
that $ R_{\alpha}$ is too, which implies that 
 $\alpha\in C_c(L)$. 
 Therefore, $\bigvee_{n=0}^{\infty} {{\mathrm{coz}}}(e_n)\in {{\mathrm{Coz}}}_c[L]$. 
\end{proof}
An element $a$ of frame $L$ is said to \textit{ rather below} an  element $b$ or $a$  is {\it well inside}  $b$, written as $a\prec b$, if there is an element $x$ such that $a\wedge x=\bot$ and $x\vee b=\top.$
In \cite{Charalambous}, a $\sigma$-frame $L$ is \textit{regular} if for every $a\in L$, there exist $(a_n)_{n=1}^{\infty}$ such that $a_n\prec a$ and $a=\bigvee_{n=1}^{\infty} a_n$ for every $n$. 
\begin{corollary}\label{regular} 
For every completely regular frame $L$, 
${{\mathrm{Coz}}}_c[L]$ is regular.
\end{corollary}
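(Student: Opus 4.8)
The plan is to exhibit, for an arbitrary $c\in\mathrm{Coz}_c[L]$, a countable family $(c_n)_{n=1}^{\infty}$ in $\mathrm{Coz}_c[L]$ with $c_n\prec c$ and $c=\bigvee_{n=1}^{\infty}c_n$, which is precisely the definition of regularity of the $\sigma$-frame $\mathrm{Coz}_c[L]$ (the $\sigma$-frame structure being guaranteed by Theorem \ref{prime55}). First I would write $c=\mathrm{coz}(\alpha)$ for some $\alpha\in\mathcal{C}_c(L)$ and apply Proposition \ref{counterpart of theorem 5.2 } to obtain a sequence $\{\alpha_n\}_{n\in\mathbb{N}}\subseteq B(\mathcal{C}_c(L))$ with $c=\bigvee_{n}\mathrm{coz}(\alpha_n)$. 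Setting $c_n=\mathrm{coz}(\alpha_n)$ immediately supplies the required countable join, so the only thing left to verify is $c_n\prec c$.

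For the rather-below relation I would use the standard fact that a complemented element is well inside every element lying above it. Concretely, each $\alpha_n$ is idempotent, hence $c_n=\mathrm{coz}(\alpha_n)$ is complemented and its complement is $c_n'=\mathrm{coz}(\mathbf{1}-\alpha_n)$: the identity $\alpha_n(\mathbf{1}-\alpha_n)=\mathbf{0}$ forces $c_n\wedge\mathrm{coz}(\mathbf{1}-\alpha_n)=\bot$, while $\alpha_n+(\mathbf{1}-\alpha_n)=\mathbf{1}$ forces $c_n\vee\mathrm{coz}(\mathbf{1}-\alpha_n)=\top$. Since $c_n\leq c$, taking $x=c_n'$ gives $c_n\wedge x=\bot$ and $x\vee c\geq x\vee c_n=\top$, so that $c_n\prec c$.

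The one point that genuinely needs care, and which I expect to be the main obstacle, is that regularity of the $\sigma$-frame demands the witness $x$ to lie in $\mathrm{Coz}_c[L]$ itself, not merely in $L$. This is where the countable-range hypothesis enters: because $\mathbf{1}-\alpha_n$ is again an idempotent element of $\mathcal{R}(L)$, and every idempotent belongs to $\mathcal{C}_c(L)$ (as used in the proof of Proposition \ref{counterpart of proposition 4.4}, since $R_{\mathbf{1}-\alpha_n}\subseteq\{0,1\}$ is finite, hence countable), we obtain $c_n'=\mathrm{coz}(\mathbf{1}-\alpha_n)\in\mathrm{Coz}_c[L]$. Consequently the relation $c_n\prec c$ holds intrinsically within $\mathrm{Coz}_c[L]$, and together with $c=\bigvee_{n}c_n$ this shows $\mathrm{Coz}_c[L]$ is a regular $\sigma$-frame.
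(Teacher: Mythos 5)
Your proof is correct and follows exactly the route the paper intends: the corollary is stated without proof as an immediate consequence of Proposition \ref{counterpart of theorem 5.2 } (every $\mathrm{coz}(\alpha)$ with $\alpha\in\mathcal{C}_c(L)$ is a countable join of cozeros of idempotents) together with the fact that a complemented element is rather below anything above it. Your extra care in checking that the witness $\mathrm{coz}(\mathbf{1}-\alpha_n)$ itself lies in $\mathrm{Coz}_c[L]$ is exactly the detail the paper leaves implicit, and it is handled correctly.
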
  
A bounded lattice $L$ is \textit{normal} if given 
$a \vee b = \top$ that  $a,b\in L,$  there exist $u, v\in L$ such that
$a\vee u = \top = v\vee b$ and $u \wedge v =\bot.$
Also, a normal $\sigma$-frame $L$ is  \textit{perfectly normal} if for each $a \in L$ there exist $(a_n)_{n=1}^{\infty}$ such that   $a\wedge b=a\wedge c$ if and only $a_n\vee b=a_n\vee c$ for every $a,b\in L$ (see \cite{Charalambous}).

By \cite[Proposition 1-1]{Christopher},  it is obvious that  
${{\mathrm{Coz}}}_c[L]$ is normal and perfectly normal for every completely regular frame $L$.

An \textit{Alexandroff algebra} is a normal $\sigma$-lattice $L$ that
for every $a\in L$, there exist $(a_n, b_n)_{n=1}^{\infty}$
such that  
$b_n\vee a =\top$, 
$ b_n\wedge a_n =\bot $ and 
$\bigvee_{n=1}^{\infty} a_n=a.$
\begin{corollary} 
For every completely regular frame $L$, 
${{\mathrm{Coz}}}_c[L]$ is an Alexandroff algebra frame.
\end{corollary}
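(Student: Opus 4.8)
The plan is to read this off directly from the regularity and normality of ${{\mathrm{Coz}}}_c[L]$, both of which are already in hand. First I would verify the structural hypotheses of the Alexandroff algebra definition: by Theorem \ref{prime55}, ${{\mathrm{Coz}}}_c[L]$ is a $\sigma$-frame, hence in particular a $\sigma$-lattice, and by the remark following Corollary \ref{regular} (via \cite[Proposition 1-1]{Christopher}) it is normal. So ${{\mathrm{Coz}}}_c[L]$ is a normal $\sigma$-lattice, as required at the outset of the definition.

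Next I would fix an arbitrary $a\in {{\mathrm{Coz}}}_c[L]$ and invoke Corollary \ref{regular}: there is a sequence $(a_n)_{n=1}^{\infty}$ in ${{\mathrm{Coz}}}_c[L]$ with $a_n\prec a$ for every $n$ and $a=\bigvee_{n=1}^{\infty} a_n$. The last identity already furnishes the third Alexandroff condition $\bigvee_{n=1}^{\infty} a_n=a$ verbatim.

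The heart of the argument is then merely to unwind the rather-below relation into the witnesses the definition demands. For each $n$, the relation $a_n\prec a$ means, by definition, that there is an element $b_n\in {{\mathrm{Coz}}}_c[L]$ with $a_n\wedge b_n=\bot$ and $b_n\vee a=\top$. These are precisely the remaining two Alexandroff conditions, namely $b_n\vee a=\top$ and $b_n\wedge a_n=\bot$. Hence the pairs $(a_n,b_n)_{n=1}^{\infty}$ witness the Alexandroff property at $a$, and since $a$ was arbitrary, ${{\mathrm{Coz}}}_c[L]$ is an Alexandroff algebra frame.

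The single point meriting care is that the witness $b_n$ delivered by the rather-below relation should lie in ${{\mathrm{Coz}}}_c[L]$ itself, not merely in the ambient frame $L$; this is automatic because regularity of a $\sigma$-frame is defined intrinsically, so the relation $\prec$ in Corollary \ref{regular} is taken within ${{\mathrm{Coz}}}_c[L]$. I therefore expect no genuine obstacle: the corollary is essentially a repackaging of regularity and normality, with the Alexandroff conditions being exactly the clauses of the definition of $\prec$ together with the regular decomposition of $a$.
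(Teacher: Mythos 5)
Your proof is correct, but it follows a genuinely different route from the paper's. The paper argues directly: by Proposition \ref{counterpart of theorem 5.2 }, every $a\in{\mathrm{Coz}}_c[L]$ is a countable join $\bigvee_{n}{\mathrm{coz}}(e_n)$ with each $e_n$ idempotent, and since each ${\mathrm{coz}}(e_n)$ is complemented, the pairs $\bigl({\mathrm{coz}}(e_n),{\mathrm{coz}}(e_n)'\bigr)$ serve as explicit Alexandroff witnesses ($b_n\wedge a_n=\bot$ trivially, and $b_m\vee a\geq{\mathrm{coz}}(e_m)'\vee{\mathrm{coz}}(e_m)=\top$). You instead observe that, once normality is granted via \cite[Proposition 1-1]{Christopher}, the remaining Alexandroff clauses are literally a restatement of regularity of a $\sigma$-frame --- the witness for $a_n\prec a$ is exactly the required $b_n$ --- so the corollary falls out of Corollary \ref{regular} with no further construction. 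Your version is shorter and makes transparent that ``Alexandroff algebra'' here amounts to ``normal regular $\sigma$-lattice''; the paper's version is self-contained at this point and exhibits concrete complemented witnesses, in the same spirit as the way Corollary \ref{regular} itself is obtained from the idempotent decomposition. The one point you flag --- that the witness of $a_n\prec a$ must lie in ${\mathrm{Coz}}_c[L]$ rather than merely in $L$ --- is handled correctly, since Corollary \ref{regular} asserts regularity of ${\mathrm{Coz}}_c[L]$ as a $\sigma$-frame in the intrinsic sense of \cite{Charalambous}.
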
  
\begin{proof} 
Let $a\in L$ be given. 
By Proposition \ref{counterpart of theorem 5.2 }, 
 there exists $(e_n)_{n=1}^{\infty}\subseteq B\big(\mathcal{R}(L)\big)$ such that 
 $a=\bigvee_{\lambda\in\Lambda} {\mathrm{coz}}(e_{a_\lambda}).$ We 
put $(a_n, b_n)_{n=1}^{\infty}=\big({\mathrm{coz}}(e_n),{\mathrm{coz}}(e_n)'\big)_{n=1}^{\infty}$, then  we have
\begin{enumerate} 
\item[{\rm (1)}] 
$\bigvee_{n=1}^{\infty} a_n=a,$
\item[{\rm (2)}]  
$ b_n\wedge a_n =\bot $  for every $n\in \mathbb N$,  and 
\item[{\rm (3)}] 
$b_m\vee a={\mathrm{coz}}(e_m)'\vee\bigvee_{n=1}^{\infty}{\mathrm{coz}}(e_n)=\top$   for every $m\in \mathbb N$.
\end{enumerate} 
Therefore, the proof is complete.
\end{proof}
A \textit{cover} of   $\sigma$-frame  $L$ is a 
family $ (a_n)_{n=1}^{\infty} \subseteq  L $ such that $\bigvee_{n=1}^{\infty} a_n=\top.$
A cover $(b_n)_{n=1}^{\infty}$ of   $\sigma$-frame  $L$ is \textit{locally finite} if there
is a cover $(c_n)_{n=1}^{\infty}$ of $L$ and, for each $m$, a finite subset $p(m)$ of $\mathbb{N}$ such that $c_m \wedge b_n = \bot$
if $n\notin p(m)$. 
A $\sigma$-frame is \textit{paracompact} if for each countable cover $\mathcal{A}=(a_n)_{n=1}^{\infty}$ of $L$ there
is a locally finite cover $\mathcal{B}= (b_m)_{m=1}^{\infty}$ of $L$ such that for each $m$ there is some $n$ with
$b_m \leq a_n$. 
A cover $(a_n)_{n=1}^{\infty}$ of   $\sigma$-frame  $L$ is \textit{shrinkable} if there is a
cover $(d_n)_{n=1}^{\infty}$ of $L$ with $d_n \leq a_n$ for every $n$.

\begin{proposition}
For every completely regular frame $L$, the following statements are  true: 

\begin{enumerate} 
\item[{\rm (1)}] 
${{\mathrm{Coz}}}_c[L]$ is paracompact.
\item[{\rm (2)}] 
Each cover of ${{\mathrm{Coz}}}_c[L]$ is shrinkable.
\end{enumerate} 
\end{proposition}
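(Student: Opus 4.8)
The plan is to obtain both statements from a single input that we already have, namely that ${{\mathrm{Coz}}}_c[L]$ is a \emph{regular} $\sigma$-frame (Corollary \ref{regular}), together with the standard $\sigma$-frame principle that every regular $\sigma$-frame is paracompact. It is worth recording at the outset that, by the very definition adopted here, a cover of a $\sigma$-frame is an at most countable family; hence the passage to a countable subcover (the Lindel\"of step one meets in the spatial setting) never arises, and this is precisely what lets regularity alone carry the argument. I would therefore first settle (1) and then deduce (2) from it by a purely formal collapse.

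For (1), the approach is to transcribe into the pointfree language the classical proof that a regular Lindel\"of space is paracompact. Given a cover $\mathcal{A}=(a_n)_{n=1}^{\infty}$, I would use regularity of ${{\mathrm{Coz}}}_c[L]$ to replace each $a_n$ by a countable join $a_n=\bigvee_k a_{nk}$ with $a_{nk}\prec a_n$, thereby producing a countable refinement $(c_i)_{i=1}^{\infty}$ in which each $c_i$ is well inside some member $a_{n(i)}$ of $\mathcal{A}$. For each $i$ the relation $c_i\prec a_{n(i)}$ supplies a witness $w_i$ with $c_i\wedge w_i=\bot$ and $w_i\vee a_{n(i)}=\top$, and the candidate locally finite refinement is built by the inductive subtraction
\[
b_i:=a_{n(i)}\wedge\bigwedge_{j<i} w_j .
\]
Local finiteness against $(c_i)$ is immediate: if $i>k$ then $w_k$ occurs in the meet, so $c_k\wedge b_i\leq c_k\wedge w_k=\bot$, whence each $c_k$ meets only the finitely many $b_1,\dots,b_k$. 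Each $b_i\leq a_{n(i)}$ refines $\mathcal{A}$. Since this is exactly the content of the paracompactness theorem for regular $\sigma$-frames in \cite{Charalambous}, I would cite that result, having only to verify its hypothesis, which is Corollary \ref{regular}.

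For (2), the plan is to feed the conclusion of (1) back in. Let $(a_n)_{n=1}^{\infty}$ be an arbitrary cover. By (1) there is a locally finite cover $(b_m)_{m=1}^{\infty}$ refining it, so one may fix $\varphi\colon\mathbb{N}\to\mathbb{N}$ with $b_m\leq a_{\varphi(m)}$ for every $m$. Setting
\[
d_n:=\bigvee\{\,b_m \mid \varphi(m)=n\,\},
\]
an admissible countable join in the $\sigma$-frame, one checks at once that $d_n\leq a_n$ for each $n$ (every joinand lies below $a_{\varphi(m)}=a_n$) and that $\bigvee_n d_n=\bigvee_m b_m=\top$, so $(d_n)_{n=1}^{\infty}$ is the required shrinking. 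This step is entirely formal once (1) is available.

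The main obstacle is concentrated in (1), and specifically in the verification that the family $(b_i)$ above is still a cover, i.e. $\bigvee_i b_i=\top$. In the spatial argument this is read off by a least-index computation at each point, but ${{\mathrm{Coz}}}_c[L]$ has no points, so the join must be forced by a genuinely frame-theoretic argument exploiting the interplay between the witnesses $w_j$ and the covering property $\bigvee_i c_i=\top$. Since \cite{Charalambous} already carries out this pointfree construction for arbitrary regular $\sigma$-frames, the cleanest route is to invoke it rather than repeat it; the only substantive work left on our side is the recognition that ${{\mathrm{Coz}}}_c[L]$ is a regular $\sigma$-frame, which Theorem \ref{prime55} and Corollary \ref{regular} have already secured.
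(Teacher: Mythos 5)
Your proposal is correct and follows essentially the same route as the paper: both reduce everything to Corollary \ref{regular} (that ${\mathrm{Coz}}_c[L]$ is a regular $\sigma$-frame) and then invoke the standard theorem that regular $\sigma$-frames are paracompact with shrinkable covers, the paper citing \cite[Proposition 4]{Banaschewski(1989)} where you cite \cite{Charalambous}. Your additional sketch of the Lindel\"of-style refinement argument and the formal derivation of (2) from (1) is consistent with the paper's definitions and adds detail the paper omits, but does not change the approach.
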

\begin{proof}
By \cite[ Proposition 4]{Banaschewski(1989)} and Corollary \ref{regular},  it is obvious.
\end{proof}

\end{document}